\title{\bf On the Dual of the Solvency Cone}
\author{Andreas L{\"o}hne \thanks{Martin-Luther-Universit{\"a}t Halle-Wittenberg, Department of Mathematics, 06099 Halle(Saale), Germany, andreas.loehne@mathematik.uni-halle.de}\and Birgit Rudloff \thanks{Princeton University, Department of Operations Research and Financial Engineering \& Bendheim Center for Finance, Princeton, NJ 08544, USA, brudloff@princeton.edu, research supported by NSF award DMS-1007938.}}
\newtheorem{theorem}{Theorem}
\newtheorem{corollary}[theorem]{Corollary}
\newtheorem{lemma}[theorem]{Lemma}
\newtheorem{proposition}[theorem]{Proposition}
\newtheorem{example}[theorem]{Example}
\newtheorem{remark}[theorem]{Remark}
\newcommand{\of}[1]{\ensuremath{\left( #1 \right)}}
\newcommand{\cb}[1]{\ensuremath{ \left\{ #1 \right\} }}
\newcommand{\R}{\mathbb{R}}
\newcommand{\N}{\mathbb{N}}
\newcommand{\TT}{\mathcal{T}}
\newcommand{\PP}{\mathcal{P}}
\newcommand{\NN}{\mathcal{N}}
\newcommand{\cone}{{\rm cone\,}}
\newcommand{\conv}{{\rm conv\,}}
\newcommand{\Int}{{\rm int\,}}
\newcommand{\st} {\ensuremath{|\;}}
\begin{document}

\maketitle

\begin{abstract} \noindent
A solvency cone is a polyhedral convex cone which is used in Mathematical Finance to model proportional transaction costs. It consists of those portfolios which can be traded into nonnegative positions. In this note, we provide a characterization of its dual cone in terms of extreme directions and discuss some consequences, among them: (i) an algorithm to construct extreme directions of the dual cone when a corresponding ``contribution scheme'' is given; (ii) estimates for the number of extreme directions; (iii) an explicit representation of the dual cone for special cases. 

The validation of the algorithm is based on the following easy-to-state but difficult-to-solve result on bipartite graphs: Running over all spanning trees of a bipartite graph, the number of left degree sequences equals the number of right degree sequences. 

\medskip

\noindent
{\bf Keywords:} Dual cone, transaction costs, degree sequences, optimal flow,  networks with gains

\medskip

\noindent
{\bf MSC 2010 Classification:} 90C27, 05C07, 91G99.

\medskip

\noindent The final publication in Discrete Applied Mathematics, DOI: 10.1016/j.dam.2015.01.030,
is available via \url{http://dx.doi.org/10.1016/j.dam.2015.01.030}.

\end{abstract}

\noindent
We investigate the structure of a polyhedral convex cone, which in Mathematical Finance is called {\em solvency cone} \cite{Kabanov99, KabSaf, Schachermeyer}. Consider a portfolio $x \in \R^d$ given in physical units of $d \geq 2$ assets (or currencies) and assume that we are given market prices $\pi_{ij} > 0$ saying that, for any $z \geq 0$, $\pi_{ij} z$ units of asset $i$ can be transferred into $z$ units of asset $j$. An important special case is $\pi_{ij} = a_j / b_i$, where $a_j$ is the ask price (per unit) of asset $j$ and $b_i$ is the bid price (per unit) of asset $i$, both prices expressed by a certain reference currency (num\'eraire). A portfolio $x$ is {\em solvent} if it can be transferred into a portfolio with only nonnegative components. Under suitable axioms to the market prices, the set of all solvent portfolios provides a polyhedral convex cone with nice properties --- the solvency cone $K_d$. Among other results, we solve a problem stated in 2000 by Bouchard and Touzi \cite[page 706]{BouTou00}: {\em ``provide explicitly a generating family for the polar [or dual] cone [of  $K_d$ for $d>2$].'' }

We present the concepts and results about the solvency cone independently of this interpretation, but we insert some remarks and examples which might be useful for readers from {\em Mathematical Finance}. The solvency cone is closely related to generalized optimal flow problems, which is a well-studied problem of {\em Combinatorial Optimization} \cite{CCPS98,Jewell62}. The proof of the main result (Theorem \ref{theorem_7}) about the existence (and construction) of certain extreme directions of the dual of the solvency cone requires statements from {\em Graph Theory}. Using a link to {\em Algebraic Combinatorics} (provided by Sang-il Oum), it is shown that, running over all spanning trees of a bipartite graph, the number of left degree sequence equals the number of right degree sequences. 

Let $d \in \cb{2,3,\dots}$, $V=\cb{1,\dots,d}$ and let $\Pi=(\pi_{ij})$ be a ($d \times d$)-matrix of real numbers such that
\begin{align}
   \label{equation_1}\forall i \in V:&\quad \pi_{ii}=1,\\
   \label{equation_2}\forall i,j \in V:&\quad 0<\pi_{ij},\\
   \label{equation_3}\forall i,j,k \in V:&\quad \pi_{ij} \leq \pi_{ik}\pi_{kj},\\
   \label{equation_4}\exists i,j,k \in V:&\quad \pi_{ij} < \pi_{ik}\pi_{kj}.
\end{align}
In a few situations, only if explicitly mentioned, \eqref{equation_3} and \eqref{equation_4} will be replaced by
\begin{equation}   
   \label{equation_5}\forall i,j \in V,\;\forall k \in V \setminus \cb{i,j}:\quad \pi_{ij} < \pi_{ik}\pi_{kj}.
\end{equation}
The polyhedral convex cone
$$ K_d:=\cone \cb{\pi_{ij} e^i - e^j \st ij \in V\times V}:=\cb{\textstyle\sum_{ij\in V\times V} z_{ij}(\pi_{ij} e^i - e^j) \,\big|\; z \in \R^{d\times d},\, z\geq 0}$$
is called {\em solvency cone} induced by $\Pi$, cf. \cite{Kabanov99}.

We denote by $K_d^+:=\cb{y \in \R^d \st \forall x \in K_d:\ x^\top y \geq 0}$ the (positive) dual cone of $K_d$. The generating vectors $\pi_{ij} e^i - e^j$ of $K_d$ induce an inequality representation of $K_d^+$. We start with some well-known statements, see e.g. \cite{BouTou00,Kabanov99,Schachermeyer}. 
\begin{proposition} \label{proposition_1} 
The dual cone of $K_d$ can be expressed as
\begin{equation}\label{equation_6}
K_d^+=\cb{y \in \R^d \st \forall i,j \in V: \pi_{ij} y_i \geq y_j}.
\end{equation}
\end{proposition}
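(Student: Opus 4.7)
The plan is to unwind the two definitions and observe that they match term by term, relying on the standard fact that the dual of a finitely generated cone is determined entirely by its generators.

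First I would recall that for any conic combination $x = \sum_{ij} z_{ij}(\pi_{ij}e^i - e^j)$ with $z \geq 0$, one has $x^\top y = \sum_{ij} z_{ij}(\pi_{ij} y_i - y_j)$. Hence if $\pi_{ij} y_i \geq y_j$ for every pair $(i,j) \in V \times V$, then $x^\top y \geq 0$ for every $x \in K_d$, giving one inclusion. Conversely, if $y \in K_d^+$, then in particular $y$ must satisfy $(\pi_{ij} e^i - e^j)^\top y \geq 0$ for each fixed pair $(i,j)$, since every generator of $K_d$ lies in $K_d$. Evaluating this inner product gives exactly $\pi_{ij} y_i - y_j \geq 0$, which yields the reverse inclusion.

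There is no real obstacle here: the argument is just the general principle that if $C = \cone\{v_1, \dots, v_m\}$, then $C^+ = \{y : v_k^\top y \geq 0 \text{ for all } k\}$, applied to the specific generators $v_{ij} = \pi_{ij} e^i - e^j$. The only thing worth flagging is that one should take the quantifier in \eqref{equation_6} over all ordered pairs $(i,j)$, including $i = j$; for $i=j$ the inequality reduces to $y_i \geq y_i$ in view of \eqref{equation_1} and is vacuous, so the representation is consistent.
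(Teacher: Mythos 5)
Your proof is correct and follows essentially the same route as the paper: one inclusion by expanding the conic combination $x^\top y = \sum_{ij} z_{ij}(\pi_{ij}y_i - y_j) \geq 0$, the other by testing $y$ against each generator $\pi_{ij}e^i - e^j \in K_d$ (the paper phrases this as a contradiction, but it is the same step). The remark about the pair $i=j$ being vacuous via \eqref{equation_1} is a fine observation, though not needed.
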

\begin{proof}
Let $M$ denote the set on the right hand side of \eqref{equation_6}. Let $y \in M$, then $x^\top y \geq 0$ for all $x \in K_d$,  i.~e. $M \subseteq K_d^+$. Conversely, let $y \in K_d^+$. Assuming that $y \not\in M$, we obtain $i,j \in V$ such that $\pi_{ij}y_i < y_j$. For $x=\pi_{ij} e^i - e^j\in K_d$, this means $x^\top y < 0$, a contradiction.
\end{proof}

\begin{proposition}\label{proposition_2} One has $\R^d_+ \setminus\cb{0} \subseteq \Int K_d$ and $K_d^+ \setminus \cb{0} \subseteq \Int \R^d_+$.
\end{proposition}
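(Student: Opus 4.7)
The plan is to establish the second inclusion $K_d^+\setminus\{0\}\subseteq \Int \R^d_+$ first, and then to deduce the first inclusion by a standard bipolar-duality argument.

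So take $y \in K_d^+$ with $y\neq 0$; by Proposition \ref{proposition_1} the task is to prove $y_j>0$ for every $j\in V$. First I would show $y\geq 0$. If $y_\ell<0$ for some $\ell$, the constraint $\pi_{\ell i}y_\ell\geq y_i$ together with $\pi_{\ell i}>0$ forces $y_i<0$ for every $i\in V$. Dividing each inequality $\pi_{ij}y_i\geq y_j$ by the negative number $y_i$ then gives $\pi_{ij}\leq y_j/y_i$, and multiplying such inequalities along any directed cycle lets the $y$-ratios telescope to $1$. Applied to the cycle $j\to i\to k\to j$ built from a triple for which \eqref{equation_4} holds, this yields $\pi_{ji}\pi_{ik}\pi_{kj}\leq 1$. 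On the other hand, multiplying \eqref{equation_4} by $\pi_{ji}>0$ and using $\pi_{ij}\pi_{ji}\geq \pi_{ii}=1$ from \eqref{equation_3} gives $\pi_{ji}\pi_{ik}\pi_{kj}>\pi_{ij}\pi_{ji}\geq 1$, a contradiction. Hence $y\geq 0$. To upgrade this to $y>0$: if $y_j=0$ for some $j$, the constraint $\pi_{ji}y_j\geq y_i$ reads $0\geq y_i$, and combined with $y\geq 0$ this forces $y=0$, contradicting $y\neq 0$.

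For the first inclusion, $K_d$ is a polyhedral, hence closed, convex cone. From what has just been shown, $K_d^+\subseteq \R^d_+$ is pointed, so $K_d$ is full-dimensional and $\Int K_d=\{x\in\R^d\st x^\top y>0\ \text{for every}\ y\in K_d^+\setminus\{0\}\}$. Given $x\in \R^d_+\setminus\{0\}$, pick an index $j$ with $x_j>0$; for any $y\in K_d^+\setminus\{0\}$ the second inclusion delivers $y>0$ componentwise, so $x^\top y\geq x_j y_j>0$, and therefore $x\in \Int K_d$.

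The hard part, and the only place where \eqref{equation_3} and \eqref{equation_4} both enter in an essential way, is the cycle argument inside the proof of $y\geq 0$: one must notice that a strictly negative $y$ turns the $K_d^+$-constraints into the statement that the product $\pi_{i_1i_2}\pi_{i_2i_3}\cdots\pi_{i_ni_1}$ around any cycle is at most $1$, which together with \eqref{equation_3} is incompatible with the strict triangle inequality \eqref{equation_4}.
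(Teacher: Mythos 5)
Your proof is correct, but it reaches the key containment $K_d^+\setminus\cb{0}\subseteq\Int\R^d_+$ by a genuinely different route. The paper works primally: from \eqref{equation_3} and \eqref{equation_4} it extracts a pair with $\pi_{ij}\pi_{ji}>1$, exhibits $e^i$ as an explicit conic combination $(\pi_{ij}e^i-e^j)+\pi_{ji}^{-1}(\pi_{ji}e^j-e^i)$ of generators, propagates this to all unit vectors, and concludes $\R^d_+\subseteq K_d$, whence $K_d^+\subseteq\R^d_+$ by dualization; only then does it run the same zero-component argument you use. You instead stay entirely on the dual side, ruling out a negative component of $y\in K_d^+$ by noting that strict negativity of all components turns the constraints in \eqref{equation_6} into $\pi_{ab}\le y_b/y_a$, so that cycle products of the $\pi$'s are at most $1$, which \eqref{equation_3} together with the strict inequality \eqref{equation_4} forbids. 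The two computations are cousins (both hinge on $\pi_{ij}\pi_{ji}\ge 1$ plus \eqref{equation_4}), but the paper's version has the side benefit of explicitly exhibiting $\R^d_+\subseteq K_d$ (the unit vectors as solvent portfolios), which your argument recovers only implicitly via bipolarity. For the first inclusion, the paper uses an ad hoc separation argument where you invoke the standard characterization $\Int K_d=\cb{x\in\R^d\st x^\top y>0 \text{ for all } y\in K_d^+\setminus\cb{0}}$ of the interior of a full-dimensional closed cone; these are the same idea in different clothing, and your appeal to pointedness of $K_d^+$ to get full-dimensionality is legitimate since $K_d$ is polyhedral, hence closed and equal to its bidual. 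Both approaches are sound; yours is marginally less constructive but equally rigorous.
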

\begin{proof}
By \eqref{equation_4} we have $\pi_{kj}\pi_{ji}>\pi_{ki}$ for some $i,j,k \in V$. Using \eqref{equation_3}, we get $\pi_{ki}\pi_{ij}\pi_{ji} \geq \pi_{kj}\pi_{ji}>\pi_{ki}$ which implies $\pi_{ij}\pi_{ji}>1$.
We have $(\pi_{ij} e^i - e^j) + 1/\pi_{ji} (\pi_{ji} e^j - e^i) =(\pi_{ij}-1/\pi_{ji}) e^i$, whence $e^i \in K_d$ for some $i \in V$. For arbitrary $j\in V$ we obtain
$e^i + (\pi_{ji} e^j - e^i) = \pi_{ji} e^j \in K_d$. By \eqref{equation_2}, $e^j\in K_d$, i.~e. every unit vector belongs to $K_d$. Hence $\R^d_+ \subseteq K_d$ and $K_d^+ \subseteq \R^d_+$.

Assume there is $y \in K_d^+$ with $y_i=0$ for some $i \in V$. The inequalities in \eqref{equation_6} together with \eqref{equation_2} imply $y=0$. Hence $K_d^+ \setminus \cb{0} \subseteq \Int \R^d_+$. 

Let $x \in \R^d_+\setminus\cb{0}$ and assume that $x \not\in \Int K_d$. By a typical separation argument there is $y \in K_d^+\setminus\cb{0} \subseteq \Int \R^d_+$ such that $x^\top y \leq 0$. This implies $x=0$, a contradiction.
\end{proof}

\begin{remark} Condition \eqref{equation_4} can be omitted if the definition of the solvency cone is slightly amended, for instance, $\tilde K_d:=K_d + \R^d_+$, compare \cite[page 22]{Schachermeyer}. 	
Then, the first part of the proof of Proposition \ref{proposition_2} becomes obsolete. 
Condition \eqref{equation_4} is only used to prove Proposition \ref{proposition_2}. In terms of Mathematical Finance, condition \eqref{equation_4} excludes the trivial case of no transaction costs.
\end{remark}

\begin{proposition}\label{proposition_2a}
One has $K_d \cap -\R^d_+ = \cb{0}$.
\end{proposition}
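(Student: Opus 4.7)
My plan is to produce a vector $y \in K_d^+$ with every coordinate strictly positive, and then use the natural bilinear pairing to force $x = 0$. Concretely, for any $x \in K_d \cap -\R^d_+$ the membership $x \in K_d$ together with $y \in K_d^+$ gives $x^\top y \geq 0$, while the componentwise inequalities $x \leq 0$ and $y > 0$ give $x^\top y \leq 0$. These two bounds yield $x^\top y = 0$; since each summand $x_k y_k$ is non-positive and the total vanishes, every $x_k y_k = 0$, and strict positivity of $y$ forces $x = 0$.

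Everything therefore reduces to exhibiting one strictly positive element of $K_d^+$. I would build such a $y$ by a potential-function trick: fix an arbitrary reference index $k_0 \in V$ and set $y_k := 1/\pi_{k, k_0}$. Each $y_k$ is strictly positive by \eqref{equation_2}. Feeding the triple $(i, j, k_0)$ into the sub-multiplicativity condition \eqref{equation_3} with $j$ as the intermediate vertex yields $\pi_{i, k_0} \leq \pi_{ij}\pi_{j, k_0}$, which rearranges to $\pi_{ij} y_i \geq y_j$ for all $i, j \in V$. By the inequality description in Proposition \ref{proposition_1}, this is exactly $y \in K_d^+$.

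The main (really the only) non-trivial step is this explicit construction of a strictly positive dual vector; everything else is routine duality. Note that condition \eqref{equation_4} is not needed, and one could alternatively appeal to the second half of Proposition \ref{proposition_2} to conclude positivity of $y$ once non-vanishing is known, but the closed-form expression via \eqref{equation_3} is more transparent and self-contained.
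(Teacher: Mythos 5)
Your proof is correct, and while the first half matches the paper in spirit, the second half takes a genuinely different route. The explicit dual vector you build, $y_k = 1/\pi_{k,k_0}$, is validated by exactly the instance $\pi_{i k_0} \leq \pi_{ij}\pi_{j k_0}$ of \eqref{equation_3}; the paper uses the same potential-function device, taking $y=(\pi_{11},\dots,\pi_{1d})^\top$ (potentials measured \emph{from} a reference node rather than \emph{towards} one), so this step is essentially identical. The divergence is in how the conclusion is extracted. The paper uses only that $y\neq 0$, i.e.\ that $K_d^+\neq\cb{0}$: given a nonzero $x\in K_d\cap-\R^d_+$ it invokes $\R^d_+\setminus\cb{0}\subseteq\Int K_d$ from Proposition \ref{proposition_2} to get $-x\in\Int K_d$, concludes $0=x+(-x)\in K_d+\Int K_d\subseteq\Int K_d$, hence $K_d=\R^d$ and $K_d^+=\cb{0}$, a contradiction. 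You instead exploit the \emph{strict} positivity of $y$ directly: $x^\top y\geq 0$ from duality, $x^\top y\leq 0$ from the signs, so every $x_ky_k$ vanishes and $x=0$. Your argument is more elementary and self-contained: it bypasses the topological interior and Proposition \ref{proposition_2} altogether, and in particular does not depend on \eqref{equation_4}, whereas the paper's route inherits that dependence through the first inclusion of Proposition \ref{proposition_2} (whose proof is where \eqref{equation_4} is used). What the paper's phrasing buys is a reusable structural fact --- a cone whose dual has a nonzero element cannot contain $0$ in its interior, hence cannot contain both $x$ and an interior point $-x$ --- but for this specific proposition your direct pairing argument is at least as clean.
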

\begin{proof}
Set $y=(\pi_{11},\pi_{12},\dots,\pi_{1d})^T$, then $y \in K_d^+ \setminus \cb{0}$ by \eqref{equation_2} and \eqref{equation_3}. Assume there is some nonzero $x \in K_d \cap -\R^d_+$. By Proposition \ref{proposition_2}, $x \in -\Int K_d$. It follows that $0 = x - x \in K_d + \Int K_d \subseteq \Int K_d$. Hence $K_d = \R^d$ and $K^d_+=\cb{0}$, a contradiction.
\end{proof}

Let us recall some standard concepts related to digraphs. A {\em digraph} (or {\em directed graph}) $G=(V,E)$ is a pair $(V,E)$, where $V=V(G)$ is a finite set of {\em nodes} and $E = E(G) \subseteq V\times V$ is the set of {\em arcs}. For an arc $(i,j)\in E$ we also write $ij \in E$ for short.  All digraphs are assumed to be {\em simple}, i.~e., there are neither multiple arcs nor loops (arcs of the form $ii$). A {\em path} in $G$ is a sequence $(n_1,a_1,n_2,a_2,\dots,a_{k-1}, n_k)$ of pairwise distinct nodes $(n_1,n_2,\dots,n_k)$ and arcs $(a_1,a_2,\dots,a_{k-1})$ of $G$ such that $a_i = n_i n_{i+1}$ (called {\em forward arc}) or $a_i=n_{i+1} n_i$ (called {\em backward arc}) for $i \in \cb{1,\dots,k-1}$. If for $k\geq 3$, $n_1=n_k$ is allowed in the latter definition, we speak about a {\em cycle} in $G$. A digraph $G$ is said to be {\em connected} if there is a path in $G$ between any two distinct nodes. A digraph $H$ is called a {\em subgraph} of the digraph $G$ if $V(H)\subseteq V(G)$ and $E(H) \subseteq E(G)$. A {\em spanning tree} of a digraph $G$ is a connected subgraph of $G$ with node set $V(G)$ and having no cycles. The degree $\deg_G(i)$ is the number of arcs of a digraph $G$ which are {\em incident}  to $i \in V=V(G)$ (i.~e. of the form $ij$ or $ji$ for $j \in V$). If any arc $ij$ of $G$ is identified with $ji$, $G$ is called {\em (undirected) graph} and $ij$ is called an {\em edge} of $G$. The above concepts are defined likewise, see e.g. \cite{BangJensenGutin2010, CCPS98} for further details.  

The set $V=\cb{1,\dots,d}$ is now splitted into two nonempty disjoint sets $P$ and $N$; the pair $(P,N)$ is called a {\em bipartition} of $V$. 
We speak about a {\em bipartite digraph} $G=(V,E)$ if $V=P\cup N$ and $E \subseteq (P \times N)\cup (N \times P)$ for a bipartition $(P,N)$ of $V$. In particular, we denote by $G=G(P,N)$ the bipartite digraph with node set $V$ and arc set $E=P\times N$. Given a bipartition $(P,N)$ of $V$, a vector $y \in \R^d$ is said to be {\em generated by a tree $T$} if $T$ is a spanning tree of $G(P,N)$ such that
\begin{equation}\label{equation_7}
  \forall ij \in E(T) \subseteq P \times N:\; \pi_{ij} y_i = y_j > 0,
\end{equation}
see Figure \ref{fig:1} (right) for an illustration. A vector $y \in \R^d$ is called {\em feasible} (with respect to $(P,N)$) if 
\begin{equation}\label{equation_8}
  \forall ij \in P \times N:\pi_{ij} y_i \geq y_j > 0.
\end{equation}
If $y \in \R^d$ is both generated by a tree $T$ and feasible, we say $y$ is a {\em feasible tree solution} (with respect to $(P,N)$). 

\begin{figure}
\begin{center}
\begin{tikzpicture}
  \Vertex[x=5 ,y=5]{1};
  \Vertex[x=5 ,y=3]{4};
  \Vertex[x=8 ,y=6]{2}
  \Vertex[x=8 ,y=4]{3}
  \Vertex[x=8 ,y=2]{5}
  \tikzset{EdgeStyle/.style={->,relative=false,in=160,out=50}}
  \Edge[label=$\pi_{12}$](1)(2)
  \tikzset{EdgeStyle/.style={->,relative=false,in=170,out=-20}}
  \Edge[label=$\pi_{13}$](1)(3)
  \tikzset{EdgeStyle/.style={->,relative=false,in=170,out=-60}}
  \Edge[label=$\pi_{15}$](1)(5)
  \tikzset{EdgeStyle/.style={->,relative=false,in=190,out=60}}
  \Edge[label=$\pi_{42}$](4)(2)
  \tikzset{EdgeStyle/.style={->,relative=false,in=190,out=20}}
  \Edge[label=$\pi_{43}$](4)(3) 
  \tikzset{EdgeStyle/.style={->,relative=false,in=200,out=-50}}
  \Edge[label=$\pi_{45}$](4)(5)
  \Vertex[x=12 ,y=5]{1};
  \node[left] at (1.180) {$y_1=1$};
  \Vertex[x=12 ,y=3]{4};
  \node[left] at (4.180) {$y_4=\frac{\pi_{13}}{\pi_{43}}$};
  \Vertex[x=15 ,y=6]{2}
  \node[right] at (2.0) {$y_2=\pi_{12}$};
  \Vertex[x=15 ,y=4]{3}
  \node[right] at (3.0) {$y_3=\pi_{13}$};
  \Vertex[x=15 ,y=2]{5}
  \node[right] at (5.0) {$y_5=\pi_{15}$};  
  \tikzset{EdgeStyle/.style={->,relative=false,in=160,out=50}}
  \Edge(1)(2)
  \tikzset{EdgeStyle/.style={->,relative=false,in=170,out=-20}}
  \Edge(1)(3)
  \tikzset{EdgeStyle/.style={->,relative=false,in=170,out=-60}}
  \Edge(1)(5)
  \tikzset{EdgeStyle/.style={->,relative=false,in=190,out=20}}
  \Edge(4)(3)
\end{tikzpicture}
\end{center}
\caption{Left: The digraph $G(P,N)$ for $P=\cb{1,4}$ and $N=\cb{2,3,5}$, where the arcs $ij \in P \times N$  are associated with market prices $\pi_{ij}$. Right: A spanning tree $T$ of $G(P,N)$ and  a vector $y=(y_1,y_2,y_3,y_4,y_5)^\top$ generated by $T$.}
\label{fig:1}
\end{figure}
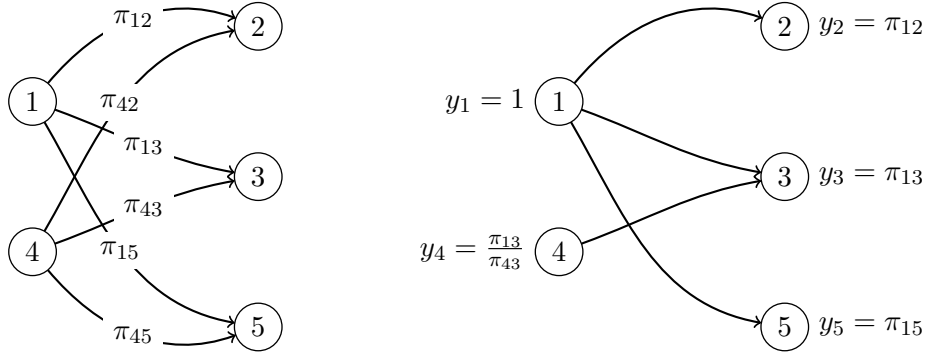

Let us provide an interpretation in terms of Mathematical Finance. Consider a portfolio $x \in \R^d$ given in physical units of $d \geq 2$ assets (or currencies) with at least one positive position $x_k>0$ and at least one negative position $x_l<0$. Setting $P:=\cb{i \in V \st x_i \geq 0}$ and $N:=\cb{j \in V \st x_j < 0}$, we obtain a bipartition. An arc $ij$ of the bipartite digraph $G(P,N)$ stands for a transaction of an unknown amount $z_{ij} \geq 0$ of asset $i$ into asset $j$ with respect to the market price $\pi_{ij}$, i.~e., $\pi_{ij} z_{ij}$ units of asset $i$ are transferred into $z_{ij}$ units of asset $j$. The variable $z_{ij}\geq 0$ can be seen as a flow along the arc $ij$ with a gain $\pi_{ij}^{-1}$. This leads to a generalized network flow problem (see e.g. \cite{Jewell62}) with demands $h_i=x_i$ ($i \in P$), $h_j\leq x_j$ ($j \in N$), a feasible flow of which ensures that $x$ is transferred into a portfolio with only nonnegative positions. The digraph $G(P,N)$ represents all potentially useful transactions to achieve this aim. 

A vector $y \in K_d^+\setminus\cb{0}$ describes a price system which is consistent in the sense that every portfolio $x \in K_d$ has a nonnegative value with respect to $y$, that is, $y^\top x \geq 0$. We have $\pi_{ij} \geq y_j / y_i$ ($i,j \in V$) for all those price systems, see \eqref{equation_6}. But, of course, a transaction is realizable (in the sense that the price system $y$ is compatible to market prices $\pi_{ij}$) only in case of $\pi_{ij} = y_j / y_i$, compare \eqref{equation_7}. A feasible tree solution 
$y$ generated by a tree $T$ describes a consistent price system where transactions along arcs of the tree $T$ are realizable. Theorem~\ref{theorem_3} below states that these price systems are exactly those which cannot be expressed as a non-trivial combination of two other consistent price systems.

\begin{example} \label{ex:1}
 Consider 5 assets represented by the set $V=\cb{1,2,3,4,5}$ and a corresponding portfolio $x=(x_1,x_2,x_3,x_4,x_5)^\top$ where we assume that $x_1,x_4 \geq 0$ (long positions) and $x_2,x_3,x_5 < 0$ (short positions). We consider in this situation the bipartition $(P,N)$ with $P=\cb{1,4}$ and $N=\cb{2,3,5}$ and the digraph $G(P,N)$, which is illustrated in Figure \ref{fig:1} (left). The subgraph $T$ with nodes $V$ and arcs $\cb{12,13,15,43}$ provides a spanning tree, see Figure \ref{fig:1} (right). The vector $y=(1,\pi_{12},\pi_{13},\pi_{13}\pi_{43}^{-1}, \pi_{15})$ is generated by the tree $T$. It is a feasible tree solution if \eqref{equation_8} holds along the arcs $42$ and $45$, which are missing in $T$ in comparison with $G(P,N)$, that is, $\pi_{42} \pi_{13} \geq \pi_{12} \pi_{43}$ and $\pi_{45}  \pi_{13} \geq \pi_{15} \pi_{43}$.
\end{example}

The next result relates feasible tree solutions to extreme directions of $K_d^+$. Subsequently, directions (and likewise feasible tree solutions) $y,z \in \R^d \setminus \cb{0}$ with $y=\alpha z$ for some $\alpha > 0$ are considered to be identical. Recall that $y \in K \setminus \cb{0}$ is called {\em extreme direction} of a polyhedral convex cone $K \subseteq \R^d$ if it cannot be expressed as the sum of two other directions of $K$. Equivalently, $y \in \R^d \setminus \cb{0}$ is an extreme direction of $K$ if it belongs to an edge of $K$ (i.~e. a $1$-dimensional face). For further details on polyhedral cones, see e.g. \cite{Rockafellar72, Webster94}.

To further motivate the next result, consider the following task: One wants to rearrange a given portfolio $x\in\R^d$ (denoted in physical units) according to current market prices $\Pi=(\pi_{ij})$ with the aim to sell certain assets and buy certain other assets. This determines a bipartition: if the holding in asset $i$ is supposed to increase, collect index $i$ into the set $N$, if asset $j$ is supposed to decrease or stay the same, collect $j$ into the set $P$. Then, the question is, which assets can be obtained from $x$ when trading in a smart way, i.~e. without burning unnecessarily money. The set of all portfolios that $x$ can be exchanged into is given by $x-K_d$, but for a smart trade we are only interested in portfolios in the boundary of $x-K_d$. The boundary of $K_d$ is the union of its facets $F(y)= \cb{x \in K_d :\; y^\top x = 0}$ taken over all extreme direction $y$ of the dual cone $K_d^+$. Theorem \ref{theorem_3} tells us that these extreme directions correspond to feasible tree solutions with respect to some bipartition. The arcs $ij$ of the corresponding spanning tree $T$ show which assets $i\in P$ are exchanged into which assets $j\in N$. Later we will see that there are at least one and at most $\binom{d-2}{p-1}$ (where $p:=|P|$) feasible tree solutions for any bipartition $(P,N)$, compare Theorem \ref{theorem_7} and Corollary \ref{corollary_10}.

\begin{theorem}\label{theorem_3}
For $y \in \R^d$, the following statements are equivalent.
\begin{itemize}
\item[(i)] $y$ is an extreme direction of $K_d^+$;
\item[(ii)] $y$ is a feasible tree solution with respect to some bipartition $(P,N)$ of $V$.
\end{itemize}
\end{theorem}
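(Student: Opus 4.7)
My plan is to prove the two directions of the equivalence separately.

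For $(ii)\Rightarrow(i)$, I first check $y\in K_d^+$. The feasibility condition \eqref{equation_8} gives $\pi_{ij}y_i\geq y_j$ directly for $ij\in P\times N$. For the remaining pairs I combine the tree $T$ with \eqref{equation_3}: when $i,j\in P$, pick a tree-neighbor $n\in N$ of $j$; the tree equality yields $\pi_{jn}y_j=y_n$, feasibility yields $\pi_{in}y_i\geq y_n$, and \eqref{equation_3} with $k=j$ gives $\pi_{in}\leq\pi_{ij}\pi_{jn}$, so $y_j\leq\pi_{ij}y_i$ after dividing by $\pi_{jn}>0$. The cases $i,j\in N$ and $i\in N$, $j\in P$ are analogous, picking tree-neighbors on the appropriate side and cascading \eqref{equation_3} once or twice. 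Extremality is then immediate because the $d-1$ active constraints indexed by $E(T)$ have linearly independent normals $\pi_{ij}e^i-e^j$, shown by a leaf-peeling induction on $T$ (for each leaf $v$, only the unique tree arc incident to $v$ has a nonzero $v$-coordinate in its normal); the resulting $1$-dimensional solution set is an edge of $K_d^+$ containing $y$.

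For $(i)\Rightarrow(ii)$, set $A(y)=\{ij:i\neq j,\ \pi_{ij}y_i=y_j\}$. Three observations drive the argument. (a) $A(y)$ is transitive: if $ij,jk\in A(y)$ then $\pi_{ij}\pi_{jk}y_i=y_k\leq\pi_{ik}y_i$ since $y\in K_d^+$, which together with \eqref{equation_3} forces $\pi_{ik}=\pi_{ij}\pi_{jk}$ and hence $ik\in A(y)$. (b) Every undirected cycle of active arcs produces a telescoping identity $\prod\pi=1$ that translates into a linear dependence among the corresponding normals, so the rank of $\{\pi_{ij}e^i-e^j:ij\in A(y)\}$ equals $|V|$ minus the number of connected components of the undirected active graph $\bar A(y)$. (c) Extremality of $y$ forces rank $d-1$, so $\bar A(y)$ is connected. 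I then build $(P,N,T)$ greedily: begin with any arc $i_0j_0\in A(y)$, setting $P_1=\{i_0\}$, $N_1=\{j_0\}$, $T_1=\{i_0j_0\}$. At step $k$, connectivity of $\bar A(y)$ supplies an active arc between $V_k=P_k\cup N_k$ and some $u\notin V_k$. If that arc has the correct orientation --- either $uv$ with $v\in N_k$, in which case $u$ joins $P$, or $vu$ with $v\in P_k$, in which case $u$ joins $N$ --- use it directly. Otherwise invoke the invariant that every $v\in P_k$ carries an outgoing tree arc $vw$ with $w\in N_k$ (and dually for $N_k$): for instance, if $uv\in A(y)$ with $v\in P_k$, then (a) applied to $uv$ and the tree arc $vw$ gives $uw\in A(y)$, a usable arc of the previous option. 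Iterating until $V_k=V$ yields a spanning tree $T$ of $G(P,N)$ with $E(T)\subseteq A(y)\cap(P\times N)$; $y$ is feasible w.r.t.\ $(P,N)$ by inheritance from $y\in K_d^+$.

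The hard part is the second direction, more specifically the proof that the greedy extension never gets stuck. This is precisely where the transitivity of $A(y)$ combines with the invariant that every placed $P$-vertex already carries an outgoing tree arc (and every placed $N$-vertex an incoming one) to reroute an incorrectly oriented boundary arc through its correctly oriented tree-neighbor. The forward-direction estimates using \eqref{equation_3} and the rank/leaf-peeling bookkeeping are then routine once this structural setup is in place.
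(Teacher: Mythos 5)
Your proof is correct, and the easy direction $(ii)\Rightarrow(i)$ coincides with the paper's argument (four-case verification of the dual inequalities via tree neighbours and \eqref{equation_3}, then leaf-peeling for linear independence). For $(i)\Rightarrow(ii)$ you use the same two key ingredients as the paper --- the LP characterization of edges by $d-1$ linearly independent active constraints, and the transitivity of the active relation forced by \eqref{equation_3} together with \eqref{equation_6} --- but you organize the construction differently. The paper picks a specific set of $d-1$ independent active constraints, shows the associated digraph is acyclic (hence a spanning tree of the complete graph) via the observation that a cycle's submatrix cannot annihilate the positive vector $y$, reads off $P$ as the set of tails and $N$ as the remaining heads, and then repairs the finitely many wrongly placed arcs $ij$ with $j\in P$ using transitivity. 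You instead work with the full active graph $A(y)$, deduce its connectivity from the rank-versus-components count (your cycle/telescoping argument is the same positive-kernel observation in different clothing), and grow the bipartite tree greedily, deciding each vertex's side as it is reached and rerouting wrongly oriented boundary arcs through the invariant tree arc at the attachment vertex. Your version makes termination and the well-definedness of $(P,N)$ transparent, at the cost of having to establish connectivity of all of $\bar A(y)$ rather than just of the $d-1$ chosen arcs; the paper's version gets the spanning-tree structure for free from linear independence but is terser about why its repair loop terminates. Two small points to make explicit if you write this up: in observation (a) the conclusion $ik\in A(y)$ presupposes $i\neq k$ (automatic in your greedy step since $u\notin V_k$ while $w\in V_k$), and the strict positivity $y>0$ used throughout (in the telescoping identities and in \eqref{equation_7}--\eqref{equation_8}) should be cited from Proposition \ref{proposition_2}.
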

\begin{proof}
(i) implies (ii). An extreme direction $y$ is a nonzero vector in $K_d^+$, hence it satisfies the inequalities in \eqref{equation_6} and, by Proposition \ref{proposition_2}, $y$ has positive components. Consequently, \eqref{equation_8} is satisfied for every bipartition $(P,N)$ of $V$. The extreme directions of a polyhedral cone in $\R^d$ correspond to its edges. It is well-known (e.g. from linear programming) that $y\in K_d^+$ belongs to an edge of $K_d^+$ if and only if there are $d-1$ linearly independent inequalities in \eqref{equation_6} being satisfied with equality. Hence there is a matrix 
$$ M=\cb{\pi_{i_1j_1} e^{i_1} - e^{j_1},\dots,\pi_{i_{d-1}j_{d-1}} e^{i_{d-1}} - e^{j_{d-1}}} $$
with linearly independent columns such that $M^\top y = 0$. Define a digraph $T=(V,E)$ with node set $V=\cb{i_1,\dots,i_{d-1},j_1,\dots,j_{d-1}}$ (multiple occurrence of indices considered as one node) and arc set $E=\cb{(i_1,j_1),\dots,(i_{d-1},j_{d-1})}$. 

We show that $T$ has no cycle. Assume that $T$ has a cycle $C$ consisting of $n$ arcs (backward or forward arcs in $C$). Let $M_C$ be the corresponding $n \times n$ submatrix of $M$ (obtained by taking the columns corresponding to the arcs of $C$ and by deleting zero rows corresponding to nodes not belonging to $C$).  Let $y_C \in \R^n$ be the vector which arises from $y>0$ by deleting all components $y_i$ for nodes $i$ not belonging to $C$. Since the columns of $M$ are linearly independent, $M_C$ has rank $n$. But, $y_C > 0$ and $M_C^\top y_C=0$, a contradiction. Since $T$ has $d-1$ arcs but no cycle, there must be at least $d$ nodes. Clearly $T$ must have exactly $d$ nodes and, consequently, $T$ is connected. Thus $T$ is a spanning tree of the complete graph $G(V)$.

Set $P:=\cb{i_1,\dots,i_{d-1}}$, $N':=\cb{j_1,\dots,j_{d-1}}$, $N:=N'\setminus P$.
Clearly, $P$ and $N$ are nonempty and as shown above, we have $P \cup N = P \cup N' = V$ and $P\cap N = \emptyset$, i.~e. $(P,N)$ is a bipartition.

Let $j \in P \cap N'$. Then there exist $i \in P, k \in N'$ such that $ij, jk \in E(T)$. Since $T$ contains no cycles, we have $ik \not\in E(T)$. Replacing $ij$ by $ik$ we obtain again a spanning tree. We have $\pi_{ij} y_i = y_j$ and $\pi_{jk} y_j = y_k$.
Using \eqref{equation_6}, we obtain $\pi_{ij}\pi_{jk} y_i = y_k \leq \pi_{ik} y_i$. Since  $y>0$, we get $\pi_{ij}\pi_{jk} \leq \pi_{ik}$. By \eqref{equation_3} we obtain $\pi_{ij}\pi_{jk} = \pi_{ik}$ which implies $\pi_{ik} y_i = y_k$. Repeating this procedure we obtain after finitely many steps a spanning tree $S$ of the bipartite graph $G(P,N)$ such that $\pi_{ij} y_i = y_j$ whenever $ij \in E(S)$, i.~e. \eqref{equation_7} holds.

(ii) implies (i). We start showing that $y$ belongs to $K^+_d$, i.~e. $\pi_{ij} y_i \geq y_j$ for all $i,j \in V$. Let $i,j \in V$ be given. We distinguish four cases. Case (a). If $i \in P$ and $j \in N$, the statement follows from \eqref{equation_8}. Case (b). If $i \in P$ and $j \in P$, there exists $k \in N$ such that $jk$ belongs to the spanning tree $T$ of the bipartite graph $G(P,N)$, hence $\pi_{jk} y_j = y_k$. By \eqref{equation_8} we have $\pi_{ik} y_i \geq y_k$. Using \eqref{equation_3}, we get 
$$\pi_{ij} y_i \geq \frac{\pi_{ik}}{\pi_{jk}} y_i \geq \frac{y_k}{\pi_{jk}}= y_j.$$  
Case (c). If $i \in N$ and $j \in N$, there is $k \in P$ such that $ki$ belongs to $T$. We have $\pi_{ki} y_k = y_i$ and, by \eqref{equation_8}, $\pi_{kj} y_k \geq y_j$. It follows
$$\pi_{ij} y_i \geq \frac{\pi_{kj}}{\pi_{ki}} y_i = \pi_{kj} y_k \geq y_j.$$ 
Case (d). Let $i \in N$ and $j \in P$. There exists $k \in P$ such that $\pi_{ki} y_k = y_i$ and as shown in case (b) we have $\pi_{kj} y_k \geq y_j$. The result now follows like in case (c). 

The $d-1$ arcs of $T$ correspond to $d-1$ inequalities in \eqref{equation_6}, which are satisfied with equality. It remains to show that these $d-1$ equations are linearly independent (i.~e. their coefficient vectors are so). For $d=2$ the statement is obviously true. Let $d>2$. It is well-known that every spanning tree $T$ has a node $k$ which is incident to exactly one of its arcs. By \eqref{equation_2}, this means that the variable $y_k$ occurs in exactly one equation. If in this equation, the node $k$, and the arc incident to $k$ are omitted, we obtain the same problem for dimension $d-1$, i.~e. the desired statement follows by induction. 
\end{proof}

We next show the existence of feasible tree solutions $y$, where additional conditions to the corresponding spanning tree can be supposed. Let $\N= \cb{1,2,\dots}$ denote the positive integers. Let a bipartition $(P,N)$ of $V=\cb{1,\dots,d}$ be given. A vector $c \in \N^P$ is called {\em $P$-configuration} if $\sum_{i \in P} c_i = d-1$. Likewise we introduce an {\em $N$-configuration}. The {\em degree vector (or degree sequence)} $\deg_T(A) \in \N^A$ of a node set $A \subseteq V$ of a tree (or graph) $T$ is the vector with components $\deg_T(i)$, $i \in A$. Clearly, if $T$ is a spanning tree of $G(P,N)$, then $\deg_T(P)$ is a $P$-configuration, also called {\em left degree sequence} of $T$. Likewise, $\deg_T(N)$ is called {\em right degree sequence} of $T$. By $\TT(H)$ we denote the set of all spanning trees of a graph or digraph $H$.

\begin{example} \label{ex:2}
	Consider the bipartition $(P,N)$ and the spanning tree $T$ from Example \ref{ex:1}, see also Figure \ref{fig:1}. The set of all $P$-configurations is $\cb{(3,1),(2,2),(1,3)}$ and the set of all $N$-configurations is $\cb{(2,1,1),(1,2,1),(1,1,2)}$. For the tree $T$, the left degree sequence is $\deg_T(P) = (3,1)$, and the right degree sequence is $\deg_T(N) = (1,2,1)$.	
\end{example}

The following result states that the number of left degree sequences of $T$ equals the number of right degree sequences of $T$, when $T$ is running over all spanning trees of a graph (or digraph) $H$. For $H=G(P,N)$ (and other special cases) the result is easy to prove but the general case seems to be highly non-trivial. The proof is based on results by Postnikov \cite{Postnikov09} about generalized permutohedra. It is important to mention that the link to Postnikov's results was found by Sang-il Oum\footnote{The authors are also greatly indebted to Annabell Berger, Matthias Kriesell, Paul Seymour and Richard Stanley for their guidance and support with respect to the proof of Lemma \ref{lemma_4}, which had been the missing tool to prove the main result, Theorem \ref{theorem_7}.}.

\begin{lemma} \label{lemma_4}
Let $H$ be a bipartite graph with bipartition $(P,N)$. Then 
\[ |\{\deg_T(P) \st T \in \TT(H)\}|=|\{\deg_T(N) \st T \in \TT(H)\}|. \]
\end{lemma}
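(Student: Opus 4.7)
The plan is to realize both $\cb{\deg_T(P) \st T \in \TT(H)}$ and $\cb{\deg_T(N) \st T \in \TT(H)}$ as the integer points of two polytopes that are generalized permutohedra in the sense of Postnikov \cite{Postnikov09}, and then to match their lattice-point counts via that theory. The natural candidates are the coordinate projections of the graphic-matroid base polytope $B_H := \conv\cb{\mathbf{1}_{E(T)} \st T \in \TT(H)} \subset \R^{E(H)}$ under the left and right endpoint maps $\phi_P \colon \R^{E(H)} \to \R^P$ and $\phi_N \colon \R^{E(H)} \to \R^N$, which send the indicator of an edge $ij$ with $i\in P$, $j \in N$ to $e^i$ and $e^j$ respectively. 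Since $B_H$ is a matroid base polytope (hence a generalized permutohedron) and its edges have the form $\mathbf{1}_e - \mathbf{1}_{e'}$ for matroid-exchangeable pairs, the edges of the images $Q_P := \phi_P(B_H)$ and $Q_N := \phi_N(B_H)$ are of the form $e^i - e^j$ (or zero), so $Q_P$ and $Q_N$ are also generalized permutohedra.

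The first substantive step is to prove the identifications
\[ \cb{\deg_T(P) \st T \in \TT(H)} = Q_P \cap \mathbb{Z}^P, \qquad \cb{\deg_T(N) \st T \in \TT(H)} = Q_N \cap \mathbb{Z}^N. \]
The inclusions $\subseteq$ are immediate. For the reverse inclusions I would invoke an M-convexity / polymatroid-exchange argument: given any lattice point $\alpha \in Q_P$ and any spanning tree $T$ whose left degree sequence differs from $\alpha$ in some coordinate, the matroid exchange axiom for the graphic matroid $M(H)$ permits one to swap an edge of $T$ for an edge in $H \setminus T$ so that the new spanning tree has a left degree sequence one step closer to $\alpha$; iterating produces a spanning tree realizing $\alpha$ on the nose.

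The second and harder step is to derive $|Q_P \cap \mathbb{Z}^P| = |Q_N \cap \mathbb{Z}^N|$. Here I would use Postnikov's description of the polymatroids attached to a bipartite graph via submodular rank functions, with the aim of expressing both lattice-point counts as a single combinatorial invariant of $H$ which is manifestly symmetric under swapping $(P,N)$ --- for instance as an evaluation of a Tutte-type polynomial of $M(H)$, or as the count of certain ``balanced'' substructures of $H$ that do not prefer one side of the bipartition over the other.

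I expect this last step to be the main obstacle. Steps one and two are essentially bookkeeping once the matroid/polytope setup is in place, but the coincidence of lattice-point counts for two distinct coordinate projections of the same matroid polytope is precisely the non-obvious content of the lemma, and it is here that the deep machinery of \cite{Postnikov09} on generalized permutohedra --- the contribution for which the authors credit Sang-il Oum --- appears to be indispensable.
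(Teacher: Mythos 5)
Your framework---realizing the two sets of degree sequences as lattice points of generalized permutohedra and appealing to Postnikov---is exactly the circle of ideas the paper uses, but your argument stops precisely where the actual content of the lemma begins. The decisive claim $|Q_P\cap\mathbb{Z}^P|=|Q_N\cap\mathbb{Z}^N|$ is not proved: you only express the hope that both counts are ``an evaluation of a Tutte-type polynomial of $M(H)$'' or a count of ``balanced substructures.'' Neither candidate is exhibited, and the Tutte route in particular is doomed as stated, since the Tutte polynomial of the graphic matroid of $H$ is blind to the bipartition, whereas the number of left degree sequences is not a matroid invariant of $H$ alone (already for $K_{1,n}$ versus other graphs with free graphic matroid the count does not match any fixed Tutte evaluation). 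What actually closes the gap in the paper is the combination of Postnikov's Theorem 11.3 (the lattice points of the trimmed generalized permutohedron $P_G^-=(\Delta_{I_1}+\dots+\Delta_{I_m})-\Delta_{[n]}$ are counted by $G$-draconian sequences) with his Corollary 11.8 (the mirror symmetry $|P_G^-\cap\mathbb{Z}^n|=|P_{G^*}^-\cap\mathbb{Z}^m|$), together with a nontrivial equivalence, proved in two separate propositions in the appendix, between $G$-draconian sequences $(a_1,\dots,a_m)$ and left degree sequences $(a_1+1,\dots,a_m+1)$. Corollary 11.8 is the deep, asymmetric-looking-yet-symmetric fact you need; until you either cite it or reprove its content, your write-up is a reformulation of the lemma rather than a proof.

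A secondary, repairable issue: your first step asserts that every lattice point of the projection $Q_P=\phi_P(B_H)$ is the left degree sequence of some spanning tree, justified by a one-line exchange argument. This is true, but it amounts to the statement that the coordinate projection of the M-convex set of spanning-tree indicator vectors is again M-convex; the basis-exchange axiom hands you a swap in $\R^{E(H)}$, and you still have to verify that a swap can be chosen so that the \emph{projected} point moves strictly closer to the target $\alpha$ (a swap between two edges with the same left endpoint does nothing in $\R^P$). The paper sidesteps this by proving the bijection with draconian sequences directly, constructing the realizing spanning tree through an explicit cycle-merging argument. This step is fixable; the missing mirror-symmetry step is not, short of importing Postnikov's Corollary 11.8.
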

\begin{proof}
	This is a consequence of Theorem 11.3 and Corollary 11.8 of \cite{Postnikov09}. The details are discussed in the appendix of this article.
\end{proof}

\begin{lemma}\label{lemma_5}
Let $H=H(P,N)$ be a bipartite digraph such that $E(H) \subseteq P \times N$ and let $S,T$ be two spanning trees of $H$ such that $\deg_S(P)=\deg_T(P)$.  For every arc $ij \in E(T) \setminus E(S)$ there exists a cycle $C$ in $H$ such that every forward arc in $C$ belongs to $S$ and every backward arc in $C$ belongs to $T$ and $C$ contains the arc $ij$.\footnote{The authors thank Matthias Kriesell for providing a shorter proof of Lemma \ref{lemma_5}.}
\end{lemma}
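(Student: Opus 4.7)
My plan is to reformulate the cycle demanded by the lemma as a directed cycle in an auxiliary digraph and then to prove the required reachability by a counting argument that combines the spanning-tree hypothesis with the equality of left degree sequences.

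First, I would construct an auxiliary digraph $D$ on the vertex set $V = P \cup N$ by introducing an arc $p \to n$ for every edge $pn \in S$ and an arc $n \to p$ for every edge $pn \in T$. A cycle in $H$ of the type required by the lemma then corresponds exactly to a simple directed cycle in $D$, and the distinguished edge $ij \in E(T) \setminus E(S)$ corresponds to the arc $j \to i$ of $D$. It therefore suffices to exhibit a simple directed path from $i$ to $j$ in $D$ and close it off with $j \to i$.

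I would establish this reachability by contradiction. Assume $j$ is not reachable from $i$, let $R$ be the set of vertices reachable from $i$ in $D$, and split $A = R \cap P$, $B = R \cap N$, $A' = P \setminus R$, $B' = N \setminus R$. Closedness of $R$ under out-arcs of $D$ immediately forces $e_S(A, B') = 0$ and $e_T(A', B) = 0$, where $e_X(Y, Z)$ denotes the number of $X$-edges with one endpoint in $Y$ and the other in $Z$. Summing the hypothesis $\deg_S(p) = \deg_T(p)$ over $p \in A'$ and inserting the two vanishing quantities into $|S| = |T| = d - 1$ yields the key identity $e_S(A, B) + e_T(A', B') = d - 1$.

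The main obstacle, and the heart of the argument, is converting this identity into a contradiction. My plan is to consider the subforests $S[R]$ and $T[V \setminus R]$ (each being a vertex-induced subgraph of a spanning tree, hence a forest) and to form their disjoint union on $V$: it contains no edge between the nonempty sets $R$ and $V \setminus R$, so its component count is at least two. On the other hand, summing (vertices $-$ edges) for each forest gives $|R| - e_S(A, B) + |V \setminus R| - e_T(A', B') = d - (d - 1) = 1$ by the identity above, a contradiction. Once reachability is established, a simple $i$-$j$ path in $D$ closed by $j \to i$ yields a simple directed cycle through $j \to i$ that translates back to a cycle $C$ in $H$ containing $ij$; by the construction of $D$, the forward arcs of $C$ (those leaving $P$-vertices) are precisely $S$-edges and its backward arcs (those leaving $N$-vertices) are precisely $T$-edges, completing the proof.
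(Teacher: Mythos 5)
Your proposal is correct and follows essentially the same route as the paper: the identical auxiliary digraph $D$, reduction to reachability in $D$, and a counting argument on a set closed under out-arcs that combines the degree hypothesis with the forest/component count. The only (cosmetic) difference is in the final bookkeeping --- you show the disjoint union $S[R]\sqcup T[V\setminus R]$ would be a spanning tree of a disconnected vertex partition, whereas the paper compares component counts of $S[X]$ and $T[X]$ and uses that each component of $T[X]$ must be joined by a $T$-edge to the outside.
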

\begin{proof}
We construct an auxiliary digraph $D$ on $P \cup N$, where there is an arc from
$k \in P$ to $l \in N$ if $k,l$ are connected by an edge in $S$ and an arc from $l \in N$
to $k \in P$ if $k,l$ are connected by an edge in $T$. A continuously directed cycle of length at least $3$ in $D$
then corresponds to an alternating cycle in $H$ as desired. 
Consequently, it suffices to prove that $D$ is strongly connected. 
Suppose this is not the case. By Menger's Theorem \cite{BangJensenGutin2010},
there exists a nonempty, proper subset $X$ of $V(D)$ such that there is no edge
from $X$ to $V(D) \setminus X$ in $D$. Let $s,t$ denote the number of edges in $S,T$, respectively, connecting two vertices in $X$,
and let $u$ denote the number of edges in $T$ connecting a vertex in $X$ to a vertex in $V(H) \setminus X$.
Let $v,w$ be the number of components of the subforest induced by $X$ in $S,T$, respectively.
It follows $v=|X|-s \geq 1$ and $w=|X|-t$. By construction of $D$
and since there is no edge from $X$ to $V(D) \setminus X$ in $D$,
we obtain $s=\sum\limits_{k \in P \cap X} \deg_D^-(k)$ and $t+u=\sum\limits_{k \in P \cap X} \deg_D^+(k)$,
where $\deg^-_D(k)$ and $\deg^+_D(k)$ denote the number of edges of $D$ starting in $k$
and terminating in $k$, respectively.
By the condition to the degrees of $S$ and $T$, the two sums on the right in these equations are equal.
Hence $s=t+u$. It follows $w=|X|-t=|X|-s+u=v+u>u$.
However, every component of the subgraph induced by $X$ in $T$ is connected
by at least one edge of $T$ to a vertex in $V(H) \setminus X$ --- which implies $u \geq w$,
contradiction.
\end{proof}

For a feasible tree solution $y$ we define a subgraph $H(y)$ of the bipartite digraph $G=G(P,N)$ by $V(H(y)) := V(G)$ and 
$E(H(y)) := \cb{ij \in P\times N \st \pi_{ij} y_i = y_j}$. We set
\[ \PP(y):=\{\deg_T(P) \st T \in \TT(H(y))\} \text{ and }  \NN(y):=\{\deg_T(N) \st T \in \TT(H(y))\}. \]

\begin{lemma}\label{lemma_6}
Let $x,y$ be two feasible tree solutions such that $x \neq \alpha y$ for all $\alpha>0$. Then
\begin{equation*}
	 \PP(x) \cap \PP(y) =\emptyset \quad \text{ and }\quad  \NN(x) \cap \NN(y) =\emptyset.	
\end{equation*}
\end{lemma}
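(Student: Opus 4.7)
The plan is to prove both assertions by contradiction, with the argument driven by the pointwise ratio $r_i := x_i / y_i$, which is strictly positive since $x,y$ are feasible tree solutions and hence have positive components. I will present the argument for $\PP(x) \cap \PP(y) = \emptyset$; the statement for $\NN$ follows by the same reasoning after swapping the roles of $P$ and $N$ and invoking the obvious $N$-analogue of Lemma \ref{lemma_5}, whose proof is symmetric in the bipartition. So suppose spanning trees $S \in \TT(H(x))$ and $T \in \TT(H(y))$ exist with $\deg_S(P) = \deg_T(P)$; the aim is to derive $x = \alpha y$ for some $\alpha > 0$, contradicting the hypothesis.

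First, I will read off two monotonicity rules for $r$ along arcs $ij \in P \times N$, by combining the feasibility bounds in \eqref{equation_8} with the defining equalities of $H(x)$ and $H(y)$. If $ij \in E(S) \subseteq E(H(x))$, then $x_j/x_i = \pi_{ij}$ while $y_j/y_i \le \pi_{ij}$, giving $r_j \ge r_i$. Symmetrically, if $ij \in E(T)$ then $y_j/y_i = \pi_{ij}$ while $x_j/x_i \le \pi_{ij}$, giving $r_j \le r_i$.

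Next, I apply Lemma \ref{lemma_5} to the bipartite digraph $H := (V, E(S) \cup E(T))$, in which both $S$ and $T$ are spanning trees with equal left degree sequences. For any arc $ij \in E(T) \setminus E(S)$, the lemma produces a cycle $C$ containing $ij$ whose forward arcs lie in $S$ and whose backward arcs lie in $T$. The key check is that, traversing $C$ in its cyclic direction, $r$ is non-decreasing at every step: a forward $S$-arc runs from a $P$-vertex to an $N$-vertex, where the $S$-rule gives $r_j \ge r_i$; a backward $T$-arc is traversed in the opposite orientation, from an $N$-vertex back to a $P$-vertex, and the $T$-rule (which says $r$ drops from $P$ to $N$) again pushes $r$ non-decreasingly along this backward traversal. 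Since $C$ is a closed loop, $r$ must be constant on all of its vertices; in particular $r_i = r_j$.

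Combining this with the trivial observation that $r_i = r_j$ whenever $ij \in E(T) \cap E(S)$ (both monotonicity rules become equalities simultaneously), I obtain $r_i = r_j$ for every arc $ij \in E(T)$. Since $T$ is a spanning tree of $V$ and is in particular connected, the function $r$ is therefore constant on $V$, and denoting this common value by $\alpha > 0$ yields $x = \alpha y$, a contradiction. The main obstacle I anticipate is the bookkeeping in the cycle computation: the whole argument hinges on Lemma \ref{lemma_5}'s specific pairing of forward arcs with $S$ and backward arcs with $T$, which is exactly what ensures that the two oppositely-signed monotonicity rules combine into a single monotonic traversal rather than canceling.
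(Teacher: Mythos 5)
Your proof is correct and rests on the same key ingredient as the paper's: Lemma \ref{lemma_5} applied to the digraph with arc set $E(S)\cup E(T)$, followed by a multiplicative comparison around the resulting alternating cycle. The difference is only in how the contradiction is closed. The paper first observes that $H(x)\cap H(y)$ contains no spanning tree, locates an edge of $S$ with the strict inequality $\pi_{ij}y_i>y_j$, and from a single cycle derives the incompatible products $\prod_l \pi_{2l-1}/\pi_{2l}>1$ and $\prod_l \pi_{2l}/\pi_{2l-1}\geq 1$. You instead show that the ratio $r=x/y$ is constant along every alternating cycle and on $E(S)\cap E(T)$, hence on all of the connected spanning tree $T$, which yields $x=\alpha y$ and contradicts the hypothesis directly; this variant is marginally cleaner since it needs neither a strictly slack edge nor the preliminary observation that $\TT(H(x))\cap\TT(H(y))=\emptyset$. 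Your reduction of the $\NN$-statement to the $N$-analogue of Lemma \ref{lemma_5} matches the paper's remark and works because reversing the roles of $P$ and $N$ merely makes $r$ non-increasing rather than non-decreasing around the cycle, which still forces constancy.
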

\begin{proof}
First observe that $H(x) \cap H(y)$ does not contain a spanning tree of $G$. Assume there is a $P$-configuration $c \in \PP(x) \cap \PP(y)$. Then there are two spanning trees $S \in \TT(H(x))$ and $T \in \TT(H(y))$ such that $c=\deg_S(P)=\deg_T(P)$. Since $x \neq \alpha y$ for all $\alpha > 0$ we have $S\neq T$. 
Since $S \in \TT(H(x))$ and $\TT(H(x)) \cap \TT(H(y))=\emptyset$, there is an edge $ij\in E(S) \setminus E(H(y))$, i.~e. $\pi_{ij} y_i > y_j$. 
By Lemma \ref{lemma_5} there exists a cycle $C=\cb{e_1,\dots,e_{2k}}$ in $G$ such that $e_{2l-1} \in E(S)$ and $e_{2l} \in E(T) $ for $l=1,\dots,k$ and  $e_{2l-1}= ij$ for some $l\in\{1,\dots,k\}$. 
Taking into account $\pi_{ij} y_i > y_j$, the inequalities \eqref{equation_8}, and the equalities arising from $T \in \TT(H(y))$ for the feasible tree solution $y>0$ we obtain 
$\prod_{l=1}^k \frac{\pi_{2l-1}}{\pi_{2l}} > 1$. Now using for the same cycle the inequalities \eqref{equation_8} and the equalities arising from $S \in \TT(H(x))$ for the feasible tree solution $x>0$ we obtain the contradiction $\prod_{l=1}^k \frac{\pi_{2l}}{\pi_{2l-1}} \geq 1$.  The second statement can be proven likewise taking into account that the role of $P$ and $N$ can be commuted in Lemma \ref{lemma_5}. 
\end{proof}

We are now ready to prove our main result.

\begin{theorem}\label{theorem_7}
For every bipartition $(P,N)$ of $V$ and every $P$-configuration $c \in \N^P$ there exists a feasible tree solution $y \in \R^d$ generated by a spanning tree $T$ of the bipartite graph $G(P,N)$ with $\deg_T(P)=c$. An analogous statement holds if an $N$-configuration is given.
\end{theorem}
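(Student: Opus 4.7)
I would proceed by strong induction on $d$, proving the $P$- and $N$-versions simultaneously. The base case $d=2$ is immediate. For the inductive step at $d\geq 3$, let $(P,N)$ be a bipartition with $|P|=p$, $|N|=n$.

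\emph{Easy case of the $P$-version:} $|P|=1$, or $|P|\geq 2$ and some $c_i=1$. If $|P|=1$, write $P=\{i\}$ and take $y_i:=1$, $y_j:=\pi_{ij}$ for $j\in N$; this is a feasible tree solution generated by the star at $i$. Otherwise some $c_i=1$: set $V':=V\setminus\{i\}$, $P':=P\setminus\{i\}$, $c':=c|_{P'}$, and apply the inductive hypothesis to obtain a feasible tree solution $y'$ on $V'$ with spanning tree $T'$ of $G(P',N)$ satisfying $\deg_{T'}(P')=c'$. Extend by $y|_{V'}:=y'$, $y_i:=\max_{j\in N}y'_j/\pi_{ij}$ (attained at some $j^*$), and $T:=T'+(i,j^*)$. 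The equations $\pi_{kj}y_k=y_j$ hold for $(k,j)\in E(T')$ by hypothesis and for $(i,j^*)$ by construction; the non-tree inequality at $i$ follows from $y_i$ being the max, while the remaining inequalities inherit from $y'$. The easy case of the $N$-version is symmetric (with $\min$ in place of $\max$).

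\emph{Hard case of the $P$-version:} $|P|\geq 2$ and all $c_i\geq 2$. Since $d-1=\sum c_i\geq 2p$ and $p+n=d$, we get $n\geq (d+1)/2$, so $2n>d-1$. Hence the sum of entries of any $N$-configuration is strictly less than $2n$, forcing at least one entry to equal $1$; in other words every $N$-configuration falls into the easy case of the $N$-version, already established above at dimension $d$. Consequently, each $N$-configuration is realized as $\deg_T(N)$ for the spanning tree $T$ of some feasible tree solution and therefore lies in $\NN(y)$ for some $y$ in the set $F$ of feasible tree solutions (modulo positive scalar). By Lemma~\ref{lemma_6} the sets $\NN(y)$ are pairwise disjoint, whence
\[ \sum_{y\in F}|\NN(y)| \;=\; \binom{d-2}{n-1}. \]
Applying Lemma~\ref{lemma_4} to each (bipartite) graph $H(y)$ yields $|\PP(y)|=|\NN(y)|$; combined with the identity $\binom{d-2}{n-1}=\binom{d-2}{p-1}$ this gives $\sum_{y\in F}|\PP(y)|=\binom{d-2}{p-1}$, exactly the total number of $P$-configurations. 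Lemma~\ref{lemma_6} (disjointness of the $\PP(y)$'s) then forces every $P$-configuration to occur in some $\PP(y)$, proving the $P$-version. The hard case of the $N$-version is symmetric, and at most one of the two hard cases is nonempty in a given dimension, since $p$ and $n$ cannot both be $\leq(d-1)/2$.

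The main obstacle is precisely the hard case: a direct inductive reduction by removing a leaf $j\in N$ fails because the $P$-neighbor of $j$ in the reconstructed tree must coincide with the minimizer of $\pi_{kj}y'_k$ over $k\in P$ for the inductive $y'$, yet $y'$ itself depends on that choice of neighbor --- a circular dependency. The counting argument above sidesteps the construction entirely, with Lemma~\ref{lemma_4} (itself nontrivial, relying on Postnikov's results on generalized permutohedra) serving as the decisive bridge between the $P$- and $N$-sides.
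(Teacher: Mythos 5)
Your proposal is correct and follows essentially the same route as the paper: an induction in which a configuration entry equal to $1$ permits a direct leaf-extension via the $\arg\max$ (resp.\ $\arg\min$) rule, while the remaining case is handled by the counting argument combining Lemma~\ref{lemma_4} (equality of $|\PP(y)|$ and $|\NN(y)|$) with Lemma~\ref{lemma_6} (disjointness) and the identity $\binom{d-2}{p-1}=\binom{d-2}{n-1}$. The only difference is organizational --- you case on whether the given configuration has a unit entry, whereas the paper cases on $p\geq n$ versus $n>p$ --- and your version spells out the counting step that the paper leaves terse.
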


\begin{proof}
We prove the result by induction. For a bipartition of $V=\cb{1,2}$, the statement is obvious. Assume the statement holds for every bipartition of $V_0=\cb{1,\dots,d-1}$. Let $(P,N)$ be a bipartition of $V=\cb{1,\dots,d}$ and let $c\in \N^P$ be a $P$-configuration.

Set $p=|P|$ and $n=|N|$. For the moment we assume that $p \geq n$. Hence there exists $i \in P$ such that $c_i=1$, say $d \in P$ and $c_d = 1$. Set $P_0 := P \setminus\cb{d}$ and consider the bipartition $(P_0, N)$ of $V_0$ and the $P_0$-configuration $c_0 \in \N^P_0$ which arises from $c$ by omitting the last component. We assumed that there is a feasible tree solution $y^0 \in \R^{d-1}$ generated by a spanning tree $T_0$ of $G(P_0, N)$ with $\deg_{T_0}(P_0)= c_0$. Set $T:=(V_0 \cup \cb{d}, E(T_0) \cup \cb{dk})$, where $k \in \arg \max \{y^0_j / \pi_{dj} \,|\; j \in N\}$, and $y := (y^0_1,\dots y^0_{d-1}, y^0_k / \pi_{dk})^\top$. Of course, $T$ is a spanning tree of $G(P,N)$ and we have $\deg_T(P)= c$. Let $j \in N$ be given. We have $y_d = y_k / \pi_{dk}  \geq y_j / \pi_{dj}$ and hence $\pi_{dj} y_d \geq y_j$, i.~e. $y$ is feasible with respect to $(P,N)$. This means, if $p \geq n$, there exists a feasible tree solution $y$ with respect to a tree $T$ such that $\deg_T(P)=c$. 

Still assuming $p \geq n$, we next show that for every $N$-configuration $b$ there exist a feasible tree solution $y$ with respect to a tree $T$ such that $\deg_T(N)=b$. To this end, note first that there are $\binom{d-2}{p-1}$ different $P$-configurations and the same number of different $N$-configurations, namely $\binom{d-2}{n-1} = \binom{d-2}{(d-p)-1} = \binom{d-2}{(d-2)-(p-1)}= \binom{d-2}{p-1}$. Given a $P$-configuration $c$ we obtain the result for one corresponding $N$-configuration, namely, if $y$ is a feasible tree solution generated by $T$ such that $c=\deg_T(P)$, then we have the desired result for $b:=\deg_T(N)$. It remains to show that all $N$-configurations are ``covered'' in this way. But this follows from Lemmas \ref{lemma_4} and \ref{lemma_6}. 

Now, the case $n > p$ follows likewise by commuting the roles of $P$ and $N$. Note that the terms $\arg \max \{y^0_j / \pi_{dj} \,\big|\; j \in N\}$ and $(y^0_1,\dots y^0_{d-1}, y^0_k / \pi_{dk})^\top$ have to be replaced, respectively, by $\arg \min \{\pi_{id} y^0_i \,|\; i \in P\}$ and $(y^0_1,\dots y^0_{d-1}, \pi_{kd} y^0_k)^\top$. This completes the induction argument.
\end{proof}

Let us continue the interpretation given before Example \ref{ex:1}: Consider a bipartition $(P,N)$ and a consistent price system $y \in K_d^+\setminus\cb{0}$ which is realizable by only transactions on arcs of a spanning tree $T$. Then the degree vector $\deg_T(P)$ describes a {\em contribution scheme}: Any ``positive'' asset $i \in P$ in a portfolio is used in order to buy shares of $\deg_T(i)$ different ``negative'' assets $j \in N$. Theorem \ref{theorem_7} states that for every given contribution scheme $c \in \PP$, there exists a realizable consistent price system $y$.

\begin{corollary} \label{corollary_9}
Assume that also \eqref{equation_5} holds. Let $x,y$ be two feasible tree solutions with respect to bipartitions $(P_x,N_x)$ and $(P_y,N_y)$ of $V$, respectively. Then $(P_x,N_x) \neq (P_y,N_y)$ implies $x \neq \alpha y$ for all $\alpha > 0$. Moreover, $K_d^+$ has at least $2^d-2$ extreme directions.
\end{corollary}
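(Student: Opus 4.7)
The plan is to derive the first assertion by contradiction and then read off the cardinality bound as an immediate consequence of Theorems \ref{theorem_3} and \ref{theorem_7}. Suppose for contradiction that $x = \alpha y$ for some $\alpha > 0$. Since the defining equalities and inequalities of a feasible tree solution are invariant under positive scaling, a single positive vector $y$ is then simultaneously a feasible tree solution with respect to two distinct bipartitions $(P_1,N_1)$ and $(P_2,N_2)$, generated by spanning trees $T_1$ and $T_2$ of $G(P_1,N_1)$ and $G(P_2,N_2)$. Because $(P_1,N_1) \neq (P_2,N_2)$, we have $P_1 \triangle P_2 \neq \emptyset$; by symmetry I pick $i \in P_1 \cap N_2$. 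As both trees are spanning and their arcs point from the $P$-side to the $N$-side of the respective bipartition, there exist $j \in N_1$ with $ij \in E(T_1)$ and $k \in P_2$ with $ki \in E(T_2)$, yielding the tree equalities $\pi_{ij} y_i = y_j$ and $\pi_{ki} y_k = y_i$, hence $y_j/y_k = \pi_{ki}\pi_{ij}$.

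The core of the argument is to extract from these data a triple violating the strict submultiplicativity \eqref{equation_5}. I proceed by a case split on whether $k = j$. In the generic case $k \neq j$, the dual-cone inequality $\pi_{kj} y_k \geq y_j$ (Proposition \ref{proposition_1}, valid since $y \in K_d^+$ by Theorem \ref{theorem_3}) combined with $y_j/y_k = \pi_{ki}\pi_{ij}$ gives $\pi_{kj} \geq \pi_{ki}\pi_{ij}$, while \eqref{equation_3} supplies the reverse inequality; so equality $\pi_{kj} = \pi_{ki}\pi_{ij}$ holds on the pairwise distinct triple $i,j,k$ ($i \neq k$ from the second bipartition, $i \neq j$ from the first, $k \neq j$ by hypothesis), directly contradicting \eqref{equation_5}. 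The main subtlety is the degenerate case $k = j$, which does occur (e.g.\ for $d=2$): the two tree equalities then collapse to $\pi_{ij}\pi_{ji} = 1$, which I contradict by invoking \eqref{equation_5} in the specialised form $\pi_{ii} < \pi_{ij}\pi_{ji}$ (permissible because $j \neq i$, noting $\pi_{ii}=1$ by \eqref{equation_1}).

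For the cardinality claim, the bipartitions of $V$ are in bijection with the nonempty proper subsets $P \subseteq V$, so there are exactly $2^d - 2$ of them. Theorem \ref{theorem_7} provides at least one feasible tree solution per bipartition, and the first part of the corollary just proved guarantees that feasible tree solutions arising from different bipartitions are not positive scalar multiples of one another. Theorem \ref{theorem_3} identifies each of these $2^d - 2$ pairwise non-parallel directions with an extreme direction of $K_d^+$, yielding the claimed lower bound.
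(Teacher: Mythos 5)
Your proof is correct and follows essentially the same route as the paper's: pick a node $i$ in the symmetric difference of the two bipartitions, use the two incident tree arcs to obtain $\pi_{ki}\pi_{ij}y_k = y_j$, contradict this via \eqref{equation_5} together with the dual-cone inequality $\pi_{kj}y_k \geq y_j$, and then count bipartitions for the lower bound. The only difference is that your case split on $k=j$ is not needed: \eqref{equation_5} only requires the \emph{intermediate} index to differ from the two endpoints (which are allowed to coincide), so the single application $\pi_{kj} < \pi_{ki}\pi_{ij}$ already covers $k=j$, where it reads $1=\pi_{kk}<\pi_{ki}\pi_{ik}$.
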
	
\begin{proof}
Let $x,y$ be generated by the spanning trees $T_x,T_y$ of $G(P_x,N_x)$ and $G(P_y,N_y)$, respectively. From $(P_x,N_x) \neq (P_y,N_y)$ we obtain $(P_y\cap N_x)\cup(P_x\cap N_y)\neq\emptyset$. Without loss of generality, we assume there exists $i \in (P_y\cap N_x)$. There is $k \in P_x$ and $j \in N_y$ such that $ki \in E(T_x)$ and $ij \in E(T_y)$. Hence we have $\pi_{ki} x_k = x_i$ and $\pi_{ij} y_i = y_j$. Assume there is $\alpha > 0$ such that $x=\alpha y$, then we obtain $\pi_{ki} y_k = y_i$. Thus \eqref{equation_5} implies $\pi_{kj}y_k < \pi_{ki}\pi_{ij} y_k = y_j$. By \eqref{equation_6}, we obtain $y \not\in K_d^+$, a contradiction. The second statement follows as there are $\sum_{p=1}^{d-1} \binom{d}{p} = 2^d - 2$ different bipartitions of $V=\cb{1,\dots,d}$.
\end{proof}

\begin{corollary}\label{corollary_10}
$K_d^+$ has at most $\sum_{p=1}^{d-1} \binom{d-2}{p-1}\binom{d}{p}$ extreme directions.
\end{corollary}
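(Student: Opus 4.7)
The plan is to combine Theorem \ref{theorem_3} (which identifies extreme directions of $K_d^+$ with feasible tree solutions) with Lemma \ref{lemma_6} (which forces distinct feasible tree solutions with respect to a fixed bipartition to correspond to disjoint sets of $P$-configurations), and then to bound by a simple stars-and-bars count.

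First I would observe that, by Theorem \ref{theorem_3}, every extreme direction $y$ of $K_d^+$ arises as a feasible tree solution with respect to \emph{some} bipartition $(P,N)$ of $V$. Hence, if $N(P,N)$ denotes the number of feasible tree solutions with respect to the bipartition $(P,N)$ (identifying positively proportional solutions, as throughout), the number of extreme directions of $K_d^+$ is at most $\sum_{(P,N)} N(P,N)$, summed over all bipartitions of $V$; any overcounting caused by an extreme direction being a tree solution for several bipartitions only weakens the bound, which is harmless for an upper estimate.

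Next I would fix a bipartition $(P,N)$ with $|P|=p$ and bound $N(P,N)$ by the number of $P$-configurations. Each feasible tree solution $y$ is generated by at least one spanning tree $T$ of $G(P,N)$ contained in $H(y)$, so $\deg_T(P)\in\PP(y)$, and in particular $\PP(y)\neq\emptyset$. By Lemma \ref{lemma_6}, the sets $\PP(y)$ associated to distinct feasible tree solutions with respect to the same bipartition $(P,N)$ are pairwise disjoint subsets of the set of all $P$-configurations. Since the $P$-configurations are precisely the vectors $c\in\N^P$ with $\sum_{i\in P}c_i=d-1$, their number equals the number of compositions of $d-1$ into $p$ positive parts, namely $\binom{d-2}{p-1}$. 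Therefore $N(P,N)\leq\binom{d-2}{p-1}$.

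Finally, I would count bipartitions: the number of bipartitions $(P,N)$ of $V=\{1,\dots,d\}$ with $|P|=p$ is $\binom{d}{p}$, and $p$ ranges over $\{1,\dots,d-1\}$ since both $P$ and $N$ must be nonempty. Summing yields
\[
\#\{\text{extreme directions of }K_d^+\}\;\leq\;\sum_{p=1}^{d-1}\binom{d}{p}\binom{d-2}{p-1},
\]
which is the claimed bound. There is no real obstacle here: the only point that deserves care is that Lemma \ref{lemma_6} must be applied to two feasible tree solutions \emph{with respect to the same bipartition}, so that $\PP(x)$ and $\PP(y)$ are subsets of a common finite set whose cardinality we can compute.
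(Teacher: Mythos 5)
Your proposal is correct and follows the same route as the paper's proof: bound the number of feasible tree solutions per bipartition by the number $\binom{d-2}{p-1}$ of $P$-configurations via the disjointness in Lemma \ref{lemma_6}, sum over the $\binom{d}{p}$ bipartitions of each size, and invoke Theorem \ref{theorem_3}. Your write-up just makes explicit the stars-and-bars count and the (harmless) possible overcounting across bipartitions, which the paper leaves implicit.
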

\begin{proof}
Consider a bipartition $(P,N)$ and set $p=|P|$. Then there are $\binom{d-2}{p-1}$ different $P$-configurations for $(P,N)$. By Lemma \ref{lemma_6} there are at most $\binom{d-2}{p-1}$ feasible tree solutions with respect to $(P,N)$ which are pairwise different, i.~e. $x \neq \alpha y$ for all $\alpha > 0$. There are $\binom{d}{p}$ different bipartitions with $|P|=p$, and $\sum_{p=1}^{d-1} \binom{d}{p}$ bipartitions in total. 
The statement now follows from Theorem \ref{theorem_3}.
\end{proof}

The following example shows that the bound in Corollary \ref{corollary_10} cannot be improved.

\begin{example}\label{example_11}
Let the non-diagonal entries of the matrix $\Pi \in \R^{d\times d}$ ($d \geq 2$) be pairwise different prime numbers such that $\of{\min\cb{\pi_{ij}\st ij\in V\times V, i\neq j}}^2 > \max\cb{\pi_{ij}\st ij\in V\times V, i\neq j}$. This is possible for arbitrary dimension by the prime number theorem. Further let $\pi_{ii}:=1$ for all $i \in V$.
Then, \eqref{equation_1}, \eqref{equation_2} and \eqref{equation_5} are satisfied.
Corollary \ref{corollary_9} and Theorem \ref{theorem_3} yield that we obtain pairwise
different extreme directions for pairwise different bipartitions. 

Let us fix an arbitrary bipartition $(P,N)$. Assume that $|\PP(y)| > 1$ for some feasible tree solution $y$ with respect to $(P,N)$. Then $H(y)$ contains a cycle $C=\cb{e_1,\dots,e_{2k}}$ and we obtain $\prod_{l=1}^k \pi_{2l-1} = \prod_{l=1}^k \pi_{2l}$. But the $\pi_{ij}$ are pairwise different prime numbers, whence the contradiction. Thus we have $|\PP(y)| = 1$ for all feasible tree solutions. 

Given two $P$-configurations $c^1,c^2 \in \N^P$ such that $c^1 \neq c^2$. By Theorem \ref{theorem_7} there are feasible tree solutions $y^1,y^2$ such that $c^1 \in \PP(y^1)$ and $c^2 \in \PP(y^2)$. Clearly, we have $\PP(y)=\PP(\alpha y)$ for all $\alpha>0$. Using $|\PP(y^1)|=|\PP(y^2)|=1$ we conclude that $y^1 \neq \alpha y^2$ for all $\alpha > 0$. This means that we obtain pairwise different extreme directions of $K^+_d$ from pairwise different $P$-configurations. Hence, in this example,  $K_d^+$ has exactly $\sum_{p=1}^{d-1} \binom{d-2}{p-1}\binom{d}{p}$ extreme directions.
\end{example}

The proof of Theorem \ref{theorem_7} provides a method to compute selected extreme directions, i.~e. extreme directions with respect to a given bipartition $(P,N)$ and a given $P$-configuration $c$ (or a given $N$-configuration $b$). This allows to compute certain subsets of generating vectors in cases where the computation of all extreme directions is not any more tractable. The method consists of the two functions given below. Each function calls itself or the other function recursively. Let $\PP$ and $\NN$ denote, respectively, the set of all $P$-configurations and the set of all $N$-configurations with respect to a bipartition $(P,N)$. To explains the main idea of the algorithm, we start with an example.

\begin{example} Let $V=\cb{1,2,3,4,5}$, $P=\cb{1,4}$ and $N=\cb{2,3,5}$. Assume that we want to compute a feasible tree solution $y$ that is generated by a spanning tree $T$ with $\deg_T(P) = (2,2) = (c_1,c_4) \in \N^P$. As all components of $c$ are greater than $1$, we have to consider all $N$-configurations, that is, $(2,1,1)$, $(1,2,1)$, and $(1,1,2)$. In the first case, $b=(b_2,b_3,b_5)=(2,1,1)$, we delete node $3$ and node $5$ as they have degree $1$. For the resulting digraph with nodes $1,4,2$ and arcs $(1,2)$, $(4,2)$, a feasible tree solution is $y_2 = 1$, $y_1=\pi_{12}^{-1}$, $y_4=\pi_{42}^{-1}$. Node $3$ is added together with one of the arcs $(1,3)$ and $(4,3)$. The first one is chosen
if $\pi_{13} y_1 < \pi_{43} y_4$. In this case, we set $y_3 = \pi_{13}\pi_{12}^{-1}$. Otherwise, the arc $(4,3)$ is chosen and we set $y_3 = \pi_{43}\pi_{42}^{-1}$. This rule ensures feasibility of the constructed $y$. Likewise we add node $5$ together with one of the arcs $(1,5)$ or $(4,5)$, which is chosen by the same rule. We obtain a spanning tree $T$ and a corresponding feasible tree solution $y$ such that $\deg_T(P)$ equals one of the three $P$-configurations $(3,1)$, $(2,2)$, $(1,3)$. The remaining two cases $b=(1,2,1)$ and $b=(1,1,2)$ are treated likewise. Among the three spanning trees (with corresponding feasible tree solutions) there is one with $\deg_T(P) = (2,2)$.
\end{example}

\floatname{algorithm}{Algorithm part}
\begin{algorithm}\label{algorithm_1} 
	\caption{function $(y,b) = \text{getb}(P,N,c)$; 
	\newline\textit{Input:} bipartition $(P,N)$, $P$-configuration $c$; 
	\newline\textit{Output:} feasible tree solution $y$ with resp. to $(P,N)$; $N$-configuration $b \in \NN(y)$; } 
	\begin{algorithmic}		
		\IF{$|P|=1$}
		   \STATE $\forall i \in P, j \in N:\; y_i \leftarrow 1, y_j \leftarrow \pi_{ij}, b_j \leftarrow 1$;
		\ELSIF{$P^0 := \of{i \in P \st c_i \neq 1} \neq P$ \textbf{and} $P^0 \neq \emptyset$ }
			\STATE $c^0\leftarrow c(P^0):=\of{c_i \st i \in P^0}$;
			\STATE $(y^0,b) \leftarrow \text{getb}(P^0,N, c^0)$; (Recursion: solve a smaller problem)
			\STATE $\forall i \in P^0: y_i \leftarrow y^0_i$; $\forall i \in P \setminus P^0:  y_i \leftarrow y^0_{k(i)}/ \pi_{i k(i)} \text{ where } k(i) := \arg \max \{y^0_j /\pi_{ij} \,|\; j \in N\}$;
			\STATE $\forall  i \in P \setminus P^0: b_{k(i)} \leftarrow b_{k(i)} + 1$;
		\ELSE
		    \FOR{ $b \in \NN$}
				\STATE $(y,\bar c) \leftarrow \text{getc}(P,N,b)$; (Recursion: solve ``complementary'' problems, see part 2)
				\STATE \textbf{if } $c=\bar c$ \textbf{ then } break; \textbf{ end if}	
			\ENDFOR
		\ENDIF
	\end{algorithmic}
\end{algorithm}	

\floatname{algorithm}{Algorithm part}
\begin{algorithm}\label{alg2} 
	\caption{function $(y,c) = \text{getc}(P,N,b)$; 
	\newline\textit{Input:} bipartition $(P,N)$, $N$-configuration $b$; 
	\newline\textit{Output:} feasible tree solution $y$ with resp. to $(P,N)$; $P$-configuration $c \in \PP(y)$; }
	\begin{algorithmic}
		\IF{$|N|=1$}
		   \STATE $\forall i \in P, j \in N:\; y_j \leftarrow 1, y_i \leftarrow \frac{1}{\pi_{ij}}, c_i \leftarrow 1$;
		\ELSIF{$N^0 := \of{j \in N \st b_j \neq 1} \neq N$ \textbf{and} $N^0\neq \emptyset$}
			\STATE $b^0 \leftarrow b(N^0):=\of{b_j \st j \in N^0}$;
			\STATE $(y^0,c) \leftarrow \text{getc}(P,N^0, b^0)$; (Recursion: solve a smaller problem)
			\STATE $\forall j \in N^0: y_j \leftarrow y^0_j$; $\forall j \in N \setminus N^0:  y_j \leftarrow \pi_{k(j) j} y^0_{k(j)} \text{ where } k(j) := \arg \min \{\pi_{ij} y^0_i \,|\; i \in P\}$;
			\STATE $\forall  j \in N \setminus N^0: c_{k(j)} \leftarrow c_{k(j)} + 1$;
		\ELSE
		    \FOR{ $c \in \PP$}
				\STATE $(y,\bar b) \leftarrow \text{getb}(P,N,c)$; (Recursion: solve ``complementary'' problems, see part 1)
				\STATE \textbf{if } $b=\bar b$ \textbf{ then } break; \textbf{ end if}	
			\ENDFOR
		\ENDIF
		\end{algorithmic}
	\end{algorithm}
	
We continue with a larger example which was solved on a computer.

\begin{example}\label{example_12} Let $d=20$ and let the matrix $\Pi$ be defined as follows: The diagonal entries are set to $1$. The non-diagonal entries are filled line-wise with consecutive prime numbers starting with $\pi_{12}=59$. Then the largest entry is $\pi_{20,19}=2713$. Of course, \eqref{equation_1} and \eqref{equation_2} hold. Since $59^2>2713$, \eqref{equation_5} is satisfied. The arguments used in Example \ref{example_11} yield that $K^+_{20}$ has exactly $\sum_{p=1}^{19} \binom{18}{p-1}\binom{20}{p}= 35.345.263.800$ extreme directions. Let us choose a bipartition $(P,N)$ with $P=\cb{5,6,7,8,9,10,11}$ and $N=\cb{1,\dots,4,12,\dots,20}$. There are $\binom{18}{6}=18564$ different $P$-configurations for this bipartition. Take for instance the $P$-configuration $c=\of{3,2,4,2,2,2,4}^\top \in \N^P$. Then the algorithm yields the extreme direction
	\begin{align*}
	y=&\left(
	 \frac{487\cdot 757}{503\cdot 859},
	 \frac{491\cdot 757}{503\cdot 859},
	 \frac{619\cdot 947\cdot 1367}{677\cdot 953\cdot 1427},
	 \frac{757}{859},
	 \frac{757}{503\cdot 859},
	 \frac{947\cdot 1367}{677\cdot 953\cdot 1427},
	 \frac{1}{859},
	 \frac{1367}{953\cdot 1427},\right.\\
	 &\;\;\;\frac{1}{1117},
	 \frac{839}{859\cdot 1237},
	 \left.\frac{1}{1427},
	 \frac{1327}{1427},
	 \frac{947\cdot 1367}{953\cdot 1427},
	 \frac{1367}{1427},
	 \frac{1373}{1427},
	 \frac{829}{859},
	 \frac{839}{859},
	 \frac{839\cdot 1249}{859\cdot 1237},
	 \frac{1109}{1117},
	 1\right)^\top 
	\end{align*}
and the $N$-configuration $b=\of{1,1,1,2,1,2,2,1,1,2,1,1,3}^\top \in \N^N$	as the result.
	
\end{example}	

Next we consider the special case where $\pi_{ii}:=1$ and $\pi_{ij} := a_j/b_i$ ($i \neq j$) for given vectors $a, b \in \R^V$ such that $0 < b_i \leq a_i$ for all $i \in V$ and $0 < b_k < a_k$ for at least one $k \in V$.
It follows that conditions \eqref{equation_1} to \eqref{equation_4} hold. If we additionally assume that
\begin{equation}\label{equation_9}
	\forall i \in V:\; 0 < b_i < a_i,	
\end{equation}
then \eqref{equation_5} is satisfied, too. 


Let us give two situations that lead to this special case: In the first situation, we are given bid prices $b_i$ and ask prices $a_i$ for each asset $i\in V$, all denoted in the same currency, and a transaction between any two distinct assets
can only be made via cash in this currency (and not directly). If one of the assets, say asset $k$, is the currency itself, then $b_k=a_k=1$ and thus \eqref{equation_9} does not hold. If all assets (including possibly a riskless asset like a bond) have a positive bid-ask spread, then  \eqref{equation_9} is satisfied.
Another situation leading to the special case is a market with $d$ currencies having frictionless prices $(1,S_2,\dots,S_d)$, all expressed in currency $1$. Whenever currency $i$ is exchanged into currency $j \neq i$, transaction costs are charged at a fixed rate $k>0$ against asset $i$. Then, $\pi_{ij}=\frac{a_j}{b_i}$ for $i\neq j$ with $a_j = (1+k)S_j$ and $b_i = S_i$ and \eqref{equation_9} holds.

The following consideration tells us that in the special case considered here the realizable consistent price systems are independent from the contribution schemes, i.e., there is at most one extreme direction of $K_d^+$ for each bipartition:

Consider a feasible tree solution $y$ generated by a spanning tree $T$. If an arc $kl$ of $G$ does not belong to $T$, one can consider a path in $T$ from $k$ to $l$ such that \eqref{equation_7} holds along this path. Using the special structure   
of the $\pi_{ij}$'s, we obtain that $\pi_{kl} y_k = y_l$. This means that $y$ does not depend on the choice of the spanning tree $T$ of $G(P,N)$. The following result is immediate. It shows that, in this special case, we do not need the above algorithm as the generating vectors of $K_d^+$ can be given explicitly.

\begin{corollary} \label{corollary_13}
For the special case $\pi_{ij} = a_j/b_i$, $K_d^+$ can be expressed as
$$ K_d^+ =\cone\cb{y \in \R^d \st (P,N) \text{ is a bipartition of } V,\; \forall i \in P:\, y_i = b_i,\; \forall j \in N:\, y_j=a_j}.$$
The solvency cone $K_d$ for the special case $\pi_{ij} = a_j/b_i$ can be expressed by
$$ K_d=\big\{x \in \R^d \st \forall \text{ bipartitions } (P,N) \text{ of } V:\; \textstyle\sum_{i \in P} b_i x_i + \textstyle\sum_{j \in N} a_j x_j \geq 0\big\}.$$
$K_d^+$ has at most $2^d - 2$ extreme directions.
\end{corollary}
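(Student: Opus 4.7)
The plan is to use Theorem \ref{theorem_3} (with existence supplied by Theorem \ref{theorem_7}) together with the uniqueness observation in the paragraph preceding the corollary. For each bipartition $(P,N)$ of $V$, I propose the vector $y^{(P,N)} \in \R^d$ defined by $y^{(P,N)}_i := b_i$ for $i \in P$ and $y^{(P,N)}_j := a_j$ for $j \in N$. Feasibility is immediate: for every $ij \in P \times N$,
\[ \pi_{ij}\, y^{(P,N)}_i = (a_j/b_i)\,b_i = a_j = y^{(P,N)}_j, \]
so \eqref{equation_8} holds with equality everywhere, and all components are positive. Since \emph{every} arc of $G(P,N)$ carries equality, every spanning tree of $G(P,N)$ generates $y^{(P,N)}$, so $y^{(P,N)}$ is a feasible tree solution.

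By Theorem \ref{theorem_3}, the extreme directions of $K_d^+$ correspond (up to positive scaling) precisely to feasible tree solutions, and by the uniqueness observation preceding the corollary every such solution with respect to $(P,N)$ is a positive multiple of $y^{(P,N)}$. Consequently the extreme directions of $K_d^+$ are in bijection with the nontrivial bipartitions of $V$. Since $K_d^+$ is pointed (Proposition \ref{proposition_2}, which gives $K_d^+ \setminus \{0\} \subseteq \Int \R^d_+$), it equals the conic hull of its extreme directions, and the first identity follows. Counting the $2^d - 2$ bipartitions with $P,N\neq\emptyset$ yields the bound on the number of extreme directions.

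For the second identity I invoke the bipolar theorem: $K_d$ is a closed convex cone, hence $K_d = (K_d^+)^+$. Using the first identity together with the fact that the positive dual of a conically generated set is carved out by the inequalities indexed by its generators, this dual equals $\{x \in \R^d : \langle y^{(P,N)}, x\rangle \geq 0 \text{ for every bipartition } (P,N)\}$, which expands to $\sum_{i\in P} b_i x_i + \sum_{j\in N} a_j x_j \geq 0$. No substantial obstacle is expected: the heavy lifting has already been done by Theorems \ref{theorem_3} and \ref{theorem_7} and by the preceding uniqueness remark, and what remains is a direct verification plus a bipolar-style argument.
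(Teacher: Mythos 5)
Your proof is correct and follows essentially the same route as the paper, which presents the corollary as ``immediate'' from Theorem \ref{theorem_3} combined with the uniqueness observation in the preceding paragraph; your direct verification that $\pi_{ij}\,y^{(P,N)}_i = y^{(P,N)}_j$ for all $ij\in P\times N$ even makes the appeal to Theorem \ref{theorem_7} for existence unnecessary. One small imprecision: without \eqref{equation_9} the map from bipartitions to extreme directions is only a surjection, not a bijection (if $a_k=b_k$ for some $k$, moving $k$ between $P$ and $N$ leaves $y^{(P,N)}$ unchanged), which is exactly why the corollary claims only ``at most'' $2^d-2$ --- your count remains valid, but the bijection statement is reserved for Corollary \ref{corollary_14} under \eqref{equation_9}.
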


\begin{corollary} \label{corollary_14} Let \eqref{equation_9} be satisfied for the special case $\pi_{ij} = a_j/b_i$. Then, there is a one-to-one map between the set of all bipartitions of $V$ and the set of all extreme directions $y$ of $K_d^+$, which can be expressed as $y_i= b_i$ for $i \in P$ and $y_j=a_j$ for $j \in N$. $K_d^+$ has exactly $2^d - 2$ extreme directions. 
\end{corollary}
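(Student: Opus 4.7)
The plan is to combine Corollaries \ref{corollary_9} and \ref{corollary_13} to get matching upper and lower bounds of $2^d-2$ on the number of extreme directions, and then read off the bijection from the explicit generating system in Corollary \ref{corollary_13}.

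First I would verify that assumption \eqref{equation_9} forces condition \eqref{equation_5} to hold in the special case. For pairwise distinct $i,j,k\in V$,
$$\frac{\pi_{ik}\pi_{kj}}{\pi_{ij}}=\frac{(a_k/b_i)(a_j/b_k)}{a_j/b_i}=\frac{a_k}{b_k}>1,$$
so $\pi_{ij}<\pi_{ik}\pi_{kj}$. Hence Corollary \ref{corollary_9} is available. Corollary \ref{corollary_13} already supplies a generating set of $K_d^+$ consisting of the $2^d-2$ vectors $y^{(P,N)}$ indexed by bipartitions, with $y^{(P,N)}_i=b_i$ for $i\in P$ and $y^{(P,N)}_j=a_j$ for $j\in N$. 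Since every extreme direction of a polyhedral cone is (up to a positive multiple) one of its generators, this gives the upper bound: $K_d^+$ has at most $2^d-2$ extreme directions.

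For the matching lower bound, Theorem \ref{theorem_7} yields a feasible tree solution with respect to each of the $2^d-2$ bipartitions of $V$, Theorem \ref{theorem_3} promotes each one to an extreme direction of $K_d^+$, and Corollary \ref{corollary_9}, which is now applicable by the previous paragraph, ensures that feasible tree solutions associated to distinct bipartitions are not positive multiples of each other. Hence $K_d^+$ has at least $2^d-2$ extreme directions.

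Combining the two bounds, $K_d^+$ has exactly $2^d-2$ extreme directions. Since the $2^d-2$ explicit vectors from Corollary \ref{corollary_13} already generate $K_d^+$ and there are exactly as many extreme directions as generators in this list, none of the generators can be redundant: each $y^{(P,N)}$ is itself an extreme direction, and the assignment $(P,N)\mapsto y^{(P,N)}$ is a bijection between the set of bipartitions of $V$ and the set of extreme directions. I do not foresee a real obstacle; the only delicate point is that the strictness of \eqref{equation_9} is precisely what rules out degeneracies (it is needed both to activate \eqref{equation_5} and hence Corollary \ref{corollary_9}, and to keep the vectors $y^{(P,N)}$ pairwise non-proportional).
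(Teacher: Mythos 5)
Your proof is correct and follows essentially the same route as the paper's, which simply cites Corollary \ref{corollary_9} together with the preceding discussion of the special case; your matching upper bound (from the generating set of Corollary \ref{corollary_13}) and lower bound (from Theorem \ref{theorem_7}, Theorem \ref{theorem_3} and Corollary \ref{corollary_9}) is just a slightly more explicit assembly of the same ingredients. One cosmetic point: when verifying that \eqref{equation_9} implies \eqref{equation_5}, the case $i=j$ with $k\neq i$ should also be noted, where $\pi_{ik}\pi_{ki}=(a_k/b_k)(a_i/b_i)>1=\pi_{ii}$.
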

\begin{proof}
	Follows from Corollary \ref{corollary_9} and the above considerations.
\end{proof}

\begin{remark}
	From Corollary \ref{corollary_13} we can derive a recursive formula for the matrix $Y_d$ of generating vectors of $K_d^+$ (as columns) for the special case $\pi_{ij} = a_j/b_i$. For all $d \geq 3$ we have 
$$ Y_2 = \begin{pmatrix}
           a_1 & b_1 \\
		   b_2 & a_2
         \end{pmatrix} \hspace{2cm}
 Y_d = \begin{pmatrix}
					 	 &          &     & b_1       &            &          &      & a_1     \\
	                     & Y_{d-1}  &     & \vdots    &            &  Y_{d-1} &      & \vdots  \\
	                     &          &     & b_{d-1}   &            &          &      & a_{d-1} \\
		             a_d & \dots    & a_d & a_d       &  b_d       & \dots    & b_d  & b_d 
	   \end{pmatrix}.$$ 
If \eqref{equation_9} is satisfied, all columns of $Y_d$ are extreme directions of $K_d^+$. Then, there is no redundant generating vector in $Y_d$.	   
\end{remark}

We now consider the case where \eqref{equation_9} does not hold in the sense that $b_k=a_k$ for some $k \in V$.
 
\begin{proposition} \label{proposition_15}
For the special case $\pi_{ij} = a_j/b_i$ with $b_k=a_k$ for some $k \in V$, $K_d$ can be expressed as
$$ K_d = \cone\cb{a_j e^i - b_i e^j \st ij \in (V \times \cb{k}) \cup (\cb{k}\times V)}.$$	
\end{proposition}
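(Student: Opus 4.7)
The plan is to prove the two inclusions separately. The direction $\supseteq$ is immediate, since every vector in the generating set on the right is a positive scalar multiple of an original generator of $K_d$: $a_j e^i - b_i e^j = b_i(\pi_{ij} e^i - e^j)$ with $b_i > 0$.

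For the nontrivial inclusion $\subseteq$, I would observe that $K_d$ is the conic hull of $\cb{a_j e^i - b_i e^j \st ij \in V \times V}$ (again by rescaling by $b_i > 0$), so it suffices to show that for every $i,j \in V$ with $i \neq j$ the vector $a_j e^i - b_i e^j$ lies in the cone generated by the restricted family. The cases $i = k$ or $j = k$ are trivial, and the whole argument rests on handling $i,j \in V \setminus \cb{k}$. The driving observation is that the assumption $a_k = b_k$ collapses the submultiplicativity \eqref{equation_3} to an equality along paths routed through $k$:
\[ \pi_{ik}\pi_{kj} = \frac{a_k}{b_i}\cdot \frac{a_j}{b_k} = \frac{a_j}{b_i} = \pi_{ij}. \]
In financial terms, the zero bid--ask spread at asset $k$ makes it free to route through $k$, which is exactly what should make the corresponding generator split cleanly.

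I would then try the natural ansatz
\[ a_j e^i - b_i e^j = \alpha\,(a_k e^i - b_i e^k) + \beta\,(a_j e^k - b_k e^j) \]
with $\alpha,\beta > 0$. Matching the coefficient of $e^i$ forces $\alpha = a_j/a_k$, matching the coefficient of $e^j$ forces $\beta = b_i/b_k$, and the coefficient of $e^k$ then reads $\beta a_j - \alpha b_i = a_j b_i/b_k - a_j b_i/a_k$, which vanishes \emph{precisely because} $a_k = b_k$. Both coefficients are strictly positive by the sign assumptions on $a,b$, so this certifies $a_j e^i - b_i e^j$ as a nonnegative combination of two generators from the restricted family, as required.

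I do not anticipate a serious obstacle: the proposition reduces to the one algebraic identity above, and the only nontrivial input is the zero-spread condition $a_k = b_k$, which is exactly what kills the intermediate $e^k$-term and makes the conic decomposition work.
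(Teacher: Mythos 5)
Your proposal is correct and takes essentially the same route as the paper: the paper's proof consists of exactly the decomposition $\pi_{ij}e^i - e^j = \frac{a_j}{a_k}(\pi_{ik}e^i - e^k) + (\pi_{kj}e^k - e^j)$, which is your identity up to the harmless rescaling of each generator by the positive factor $b_i$. The only cosmetic difference is that you spell out the trivial inclusion $\supseteq$ and the positivity of the coefficients, which the paper leaves implicit.
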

\begin{proof}
	Let $ij \in V \times V$ such that $i \neq k$ and $j\neq k$. Then 
	$$\pi_{ij} e^i - e^j = a_j \of{\frac{1}{b_i} e^i - \frac{1}{a_k} e^k + \frac{1}{b_k} e^k - \frac{1}{a_j} e^j} 
	                     =  \frac{a_j}{a_k} (\pi_{ik} e^i - e^k ) + (\pi_{kj} e^k - e^j)$$
is not an extreme direction and can be omitted in the definition of $K_d$.	
\end{proof}

\begin{corollary} \label{corollary_16} For the special case $\pi_{ij} = a_j/b_i$ with $b_k=a_k$ for some $k \in V$, one has
$$ K_d^+ =\cone\cb{y \in \R^d \st Q \subseteq V \setminus\cb{k},\; \forall i \in Q:\, y_i = b_i,\; \forall j \in V \setminus Q:\, y_j=a_j}.$$	
$K_d^+$ has at most $2^{d-1}$ extreme directions. If the inequalities in \eqref{equation_9} hold for all $i \in V\setminus\cb{k}$, then $K_d^+$ has exactly $2^{d-1}$ extreme directions.
\end{corollary}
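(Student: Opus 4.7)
My plan is to dualize the reduced generating set for $K_d$ provided by Proposition \ref{proposition_15} and then identify the extreme directions of $K_d^+$ as the rays through the vertices of a box.

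First I would translate each generator into a dual inequality. A vector $y$ lies in $K_d^+$ iff it has nonnegative inner product with every generator $a_j e^i - b_i e^j$ for $ij \in (V \times \cb{k}) \cup (\cb{k} \times V)$, i.e.\ $a_k y_i \geq b_i y_k$ and $a_i y_k \geq b_k y_i$ for every $i \in V$. Since $b_k = a_k$, the case $i = k$ is trivial, and the remaining inequalities collapse to
\[ b_i\, y_k \;\leq\; a_k\, y_i \;\leq\; a_i\, y_k \qquad \text{for every } i \in V \setminus \cb{k}. \]

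Second I would use strict positivity to normalize. By Proposition \ref{proposition_2}, every nonzero $y \in K_d^+$ satisfies $y > 0$, and in particular $y_k > 0$, so we may scale to $y_k = a_k$. Under this normalization the inequalities above reduce to the box constraints $b_i \leq y_i \leq a_i$ for $i \in V \setminus \cb{k}$. Consequently the cross-section $\cb{y \in K_d^+ : y_k = a_k}$ equals the axis-aligned box $B$ cut out by $y_k = a_k$ and $b_i \leq y_i \leq a_i$ for $i \neq k$, and $K_d^+ = \cone B$.

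Third, since $B$ is a polytope disjoint from the origin (as $a_k > 0$), the edges of the cone $\cone B$ are precisely the rays through the vertices of $B$, and these span the extreme directions of $K_d^+$. The vertices of $B$ are parametrized by subsets $Q \subseteq V \setminus \cb{k}$ via $y_i = b_i$ for $i \in Q$ and $y_j = a_j$ for $j \in V \setminus Q$, the constraint $y_k = a_k$ being consistent with $k \in V \setminus Q$ because $a_k = b_k$. This delivers the cone representation stated in the corollary and the bound of $2^{d-1}$ extreme directions. If additionally $b_i < a_i$ for every $i \in V \setminus \cb{k}$, then $B$ is full-dimensional in its affine hull, so its $2^{d-1}$ vertices are pairwise distinct, and because they all share the coordinate $y_k = a_k$ they determine pairwise distinct rays through the origin; hence $K_d^+$ has exactly $2^{d-1}$ extreme directions. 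The only mildly delicate step is the standard face correspondence between $\cone B$ and $B$, but this is routine and I do not anticipate a substantial obstacle.
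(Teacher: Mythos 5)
Your proof is correct, but it takes a genuinely different route from the paper's. The paper argues recursively: after normalizing $k=d$, $b_d=a_d=1$, it assembles the generating matrix $Y_d$ from two copies of $Y_{d-1}$ (the generators of the $(d-1)$-dimensional cone from Corollary \ref{corollary_13}) augmented by a row of ones, together with the two extra columns $\bar b=(b_1,\dots,b_{d-1},1)^\top$ and $\bar a=(a_1,\dots,a_{d-1},1)^\top$; the exact count then requires checking separately that $\bar a$ and $\bar b$ are extreme and that no column $(y,1)^\top$ is a convex combination of them, via the componentwise comparison $b_i\le y_i\le a_i$. You instead dualize the reduced generating set of Proposition \ref{proposition_15} directly, obtaining the inequalities $b_i y_k\le a_k y_i\le a_i y_k$ for $i\ne k$, and recognize the cross-section $\cb{y\in K_d^+ : y_k=a_k}$ as the axis-aligned box $\prod_{i\ne k}[b_i,a_i]$; all three assertions (the $\cone$ representation, the bound $2^{d-1}$, and the exact count under \eqref{equation_9} restricted to $V\setminus\cb{k}$) then drop out of the vertex structure of the box and the standard bijection between vertices of a polytope in a hyperplane off the origin and extreme rays of the cone over it. Your argument is self-contained and arguably cleaner — it does not lean on Corollaries \ref{corollary_13} and \ref{corollary_14}, and it makes transparent why degeneracies $b_i=a_i$ for further indices only collapse vertices (halving the count, as noted in the paper's closing remark) — whereas the paper's version buys the explicit recursive matrix formula recorded in Remark \ref{remark_22}. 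The two ingredients you do use, namely Proposition \ref{proposition_15} for the reduced generator list and Proposition \ref{proposition_2} for $y>0$ (hence $y_k>0$, justifying the normalization), are both available at this point in the paper, so there is no gap.
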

\begin{proof} Without loss of generality we can assume $k=d$ and $b_d=a_d=1$. The generating vectors of $K_d^+$ are given by the columns of the matrix
$$ Y_d = \begin{pmatrix}
					 	           &&   & b_1 & a_1 \\
	                     &Y_{d-1}&  & \vdots    & \vdots     \\
	                     	 	   &&  & b_{d-1} & a_{d-1} \\
		                 1 & \dots & 1 & 1 &    1\\
	           \end{pmatrix},$$
where $Y_{d-1}$ is the matrix whose columns are the $2^{d-1}-2$ generating vectors of $K_{d-1}^+$ arising from the characterization given in Corollary \ref{corollary_13}. The first two statements now follow from Corollary \ref{corollary_13}. The last assertion follows from Corollary \ref{corollary_14} if we can show that $\bar b :=(b_1,\dots,b_{d-1},1)^\top$ and $\bar a := (a_1,\dots,a_{d-1},1)^\top$ are extreme directions, and if the vectors $(y,1)^\top$ for  columns $y$ of $Y_{d-1}$ cannot be expressed as convex combinations of $\bar b$ and $\bar a$. Assume that $\bar b$ is not an extreme direction, then $(b_1,\dots,b_{d-1})^\top$ must be a convex combination of columns of $Y_{d-1}$. But this is not possible since $b_i \leq y_i$ for all columns of $Y_{d-1}$ with at least one strict inequality. Likewise we show the remaining cases.
\end{proof}

\begin{remark} \label{remark_22}
	From Corollary \ref{corollary_16} we also obtain a recursive definition of the matrix $Y_d$ of generating vectors of $K_d^+$ (as columns) for the special case considered there. Without loss of generality we can assume $b_1=a_1=1$, which can be seen as the case where all bid and ask prices $b_i,a_i$ for $i \in \cb{2,\dots,d}$ are expressed by asset $1$ (``cash''). For all $d \geq 3$ we have 
$$ Y_2 = \begin{pmatrix}
           1   &   1 \\
		   a_2 & b_2
         \end{pmatrix} \hspace{2cm}
 Y_d = \begin{pmatrix}
					 	 &          &           &            &          &     \\
	                     & Y_{d-1}  &           &            &  Y_{d-1} &     \\
	                     &          &           &            &          &     \\
		             a_d & \dots    & a_d       &  b_d       & \dots    & b_d 
	   \end{pmatrix}.$$ 
\end{remark}

\noindent
In case of $a_k=b_k$ for more than one index $k$, one can easily see that half of the generating vectors become duplicates and can be deleted for each additional equation $a_k=b_k$.

\appendix
\section*{Appendix}

Here we provide the details of the proof of Lemma \ref{lemma_4} using the results of Postnikov \cite{Postnikov09}. We set $[n]:=\cb{1,\dots,n}$ and denote by $K_{m,n}$ the complete bipartite graph with $m$ left nodes and $n$ right nodes. For polytopes $A,B$ in $\R^n$ we define the Minkowski sum and the Minkowski difference as
	$$A+B:=\cb{a+b|\;a\in A,\; b\in B} \qquad A-B := \cb{x \in \R^n |\; \cb{x} + B \subseteq A}.$$
Let $G \subseteq K_{m,n}$ be a bipartite graph with no isolated vertices. For the graph $H$ in Lemma \ref{lemma_4} this can be assumed since otherwise there is no spanning tree. The vertices of $G$ are labeled  by $1,\dots,m$ (called left vertices) and $\bar 1,\dots,\bar n$ (called right vertices). The mirror image of $G$ is the bipartite graph $G^* \subseteq K_{n,m}$ obtained from $G$ by switching the left and right components. The graph $G$ is associated with the collection $\mathcal{I}_G$ of subsets $I_1,...,I_m \subseteq [n]$ such that $j \in I_i$ if and only if $(i,\bar j)$ is an edge of $G$. Consider the polytope 
$$ P_G^- := (\Delta_{I_1} + \dots + \Delta_{I_m}) -\Delta_{[n]},$$
	where $\Delta_I := \conv\cb{e^i |\; i \in I}$ for $I\subseteq [n]$ and $e^i$ denoting the $i$-th unit vector in $\R^n$. In \cite{Postnikov09} (compare Definition 11.2 and the beginning of Section 9), 
	the polytope $P_G^-$ is called {\em trimmed generalized permutohedron} and is denoted by $P_G^-(1,\dots,1)$. A sequence of nonnegative integers $(a_1,\dots,a_m)$ is called a {\em $G$-draconian sequence} \cite[Definition 9.2]{Postnikov09} if $a_1 + \dots + a_m = n-1$ and, for any subset 
	$\cb{i_1 < \dots\ < i_k} \subseteq [m]$, we
	have $|I_{i_1} \cup \dots \cup I_{i_k}| \geq a_{i_1} + \dots + a_{i_k} + 1$.

	Theorem 11.3 in \cite{Postnikov09} states that the number of lattice points (i.~e. points in $\mathbb{Z}^n$) in $P_G^-$ equals the number of $G$-draconian sequences (and likewise for $G^*$). Corollary 11.8 in \cite{Postnikov09} states that the number of lattice points of $P_G^-$ equals the number of lattice points in $P_{G^*}^-$. As a consequence, the number of $G$-draconian sequences equals the number of $G^*$-draconian sequences.

	To prove Lemma \ref{lemma_4}, it remains to show that $(a_1,\dots,a_m)$ is a $G$-draconian sequence if and only if $(a_1+1,\dots,a_m+1)$ is a left degree sequence of a spanning tree of $G$. This is done in the following two propositions. The first one is summary of the following items of \cite{Postnikov09}: 
	\begin{itemize}
	\item the equivalent characterization of a $G$-draconian sequence given in Definition 9.2 using the first of the three equivalent conditions in Proposition 5.4,
	\item the proof that in Proposition 5.4., the first condition implies the third condition (which was left as an exercise),
	\item a reformulation of the third condition of Proposition 5.4 in terms of a graph $F$.
	\end{itemize}

	\begin{proposition} \label{p2}
	If $(a_1,\dots,a_m)$ is a $G$-draconian sequence, then the graph $F \subseteq K_{n-1,n}$ associated with the sets $I_1^{(a_1)}, \dots ,I_m^{(a_m)}$, where $I_i^{(a_i)}$ means $I_i \in \mathcal{I}_G$ repeated $a_i$ times, has a spanning tree $T$ with $\deg_T(i) = 2$ for all left vertices $i$ of $F$.
	\end{proposition}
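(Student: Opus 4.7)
The plan is to reformulate the desired spanning tree of $F$ as a system of distinct representatives (SDR) that is a basis of the graphic matroid $M := M(K_n)$, and then apply Rado's theorem.

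For the reformulation, note that a spanning tree $T$ of $F$ with every left vertex having degree exactly $2$ is determined by the assignment $v \mapsto e_v \in \binom{I_{\phi(v)}}{2}$, where $e_v$ is the pair of right-neighbors of $v$ in $T$ and $\phi(v)\in[m]$ is the index with $v$ a copy of $I_{\phi(v)}$. Contracting each degree-$2$ left vertex into a single edge between its two neighbors preserves connectedness and cycle structure, so $T$ is a spanning tree of $F$ iff the $n-1$ pairs $(e_v)_{v\in V_L}$ are pairwise distinct and form a spanning tree of $K_n$ on $[n]$. Equivalently, $(e_v)$ must be an SDR of the family $(E_v)_{v\in V_L}$ with $E_v := \binom{I_{\phi(v)}}{2}$ that is simultaneously a basis of $M$ (which has rank $n-1$).

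Rado's theorem asserts that such an SDR-basis exists iff
\[
\operatorname{rank}_M\left(\bigcup_{v \in S} E_v\right) \geq |S| \qquad \text{for every } S \subseteq V_L.
\]
To verify this, fix $S$, set $T := \{\phi(v) : v \in S\} \subseteq [m]$ and $J := \bigcup_{i \in T} I_i$. Then $\bigcup_{v \in S} E_v = \bigcup_{i \in T} \binom{I_i}{2}$ is a union of cliques on $J$, whose connected components match exactly those of the hypergraph with hyperedges $\{I_i\}_{i \in T}$. Partition this hypergraph into components $T = T_1 \sqcup \cdots \sqcup T_c$ with corresponding vertex partition $J = J_1 \sqcup \cdots \sqcup J_c$, where $J_k = \bigcup_{i \in T_k} I_i$.

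The crucial step is to apply the $G$-draconian inequality to each $T_k$ \emph{separately}, obtaining $|J_k| \geq \sum_{i \in T_k} a_i + 1$. Summing yields $|J| \geq \sum_{i \in T} a_i + c$. Since the rank of a union of cliques in $M$ equals (vertices) minus (components) $= |J| - c$, and since $|S| \leq \sum_{i \in T} a_i$, we obtain
\[
\operatorname{rank}_M\left(\bigcup_{v \in S} E_v\right) = |J| - c \;\geq\; \sum_{i \in T} a_i \;\geq\; |S|,
\]
which verifies Rado's condition. The main conceptual hurdle is recognizing that one must decompose the union into connected components of the hypergraph before summing the draconian inequalities; the $+1$ slack in each inequality is precisely what absorbs the $c$ components. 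Everything else is routine matroid bookkeeping.
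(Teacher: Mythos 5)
Your proof is correct, but it takes a genuinely different route from the paper's. The paper argues by exchange: it starts from an arbitrary spanning subgraph of $F$ in which every left vertex has degree $2$ (which exists since the draconian condition forces $|I_i|\geq 2$ whenever $a_i\geq 1$), takes one with the minimal number of components, and uses the consequence $|J_{i_1}\cup\dots\cup J_{i_k}|\geq k+1$ of the draconian condition to perform edge swaps along cycles (a double extremal argument: minimal component count, then maximal cycle length) until the subgraph is a tree. You instead contract the degree-$2$ left vertices to reduce the problem to finding an independent transversal of the families $\binom{I_{\phi(v)}}{2}$ in the graphic matroid of $K_n$, and verify Rado's condition by decomposing the relevant union of cliques into hypergraph components and summing the draconian inequalities componentwise, with the $+1$ slack per component exactly covering the component count. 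Both proofs use the draconian hypothesis in essentially the same place, but yours outsources the augmentation work to Rado's theorem, which makes the argument shorter and more transparent (and, as a bonus, Rado's equivalence makes the converse direction of the paper's second appendix proposition nearly automatic), at the cost of invoking transversal matroid theory rather than staying elementary. The only points worth stating a bit more carefully in a final write-up are (i) that each $i\in T$ has $a_i\geq 1$, hence $|I_i|\geq 2$, so every clique $\binom{I_i}{2}$ is nonempty and covers all of $I_i$, which is needed for the identification of graph components with hypergraph components and for the rank formula $|J|-c$; and (ii) the bound $|S|\leq\sum_{i\in T}a_i$, which holds because $S$ contains at most $a_i$ copies of each $I_i$.
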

	\begin{proof} Let us denote the sets $I_1^{(a_1)}, \dots ,I_m^{(a_m)}$ by $J_1,\dots,J_{n-1}$. Since $(a_1,\dots,a_m)$ is a $G$-draconian sequence, we conclude that, for any distinct $i_1,\dots,i_k$, we have 
	\begin{equation}\label{eq1}
		|J_{i_1} \cup \dots \cup J_{i_k}| \geq k+1.
	\end{equation}
	 In particular, all left nodes of $F$ have a degree of at least $2$. Consider all subgraphs of $F$ whose left nodes have degree $2$. Among these subgraphs, consider those with the minimal number of components, say $r$ components.  Assume that $r \geq 2$. This implies that at least one component contains a cycle (recall that $|E|=|V|-\kappa$, where $\kappa$ is the number of components, if and only if the graph is acyclic). Among all subgraphs with left degrees 2 and exactly $r$ components choose one, say the subgraph $S$, which contains a cycle $C$ with a maximal number of vertices. Denote by $K$ the component of $S$ that contains $C$. Of course, the cycle $C$ has the same number of left and right vertices. By \eqref{eq1} (applied to the left nodes of $C$), there is a left node $v$ in $C$ and an edge $(v,w) \in E(F) \setminus E(S)$. We distinguish two cases: (i) If $w \not\in K$, we add $(v,w)$ to $S$ and delete an edge $(v,u)$ in $C$. This maintains the degree condition and we obtain a subgraph with less than $r$ components, a contradiction. (ii) If $w \in K$, adding $(v,w)$ to $S$ creates a cycle $D$ that contains $(v,w)$. The degree condition ensures that there is an edge $(v,u) \in E(C) \cap E(D)$. Deleting $(v,u)$ produces a cycle having more vertices than $C$. This contradicts the maximality of $C$. Hence we obtain $r=1$, i.~e., we have found a desired spanning tree $T$.
	\end{proof}

	\begin{proposition}
	$(a_1,\dots,a_m)$ is a $G$-draconian sequence if and only if $(a_1+1,\dots,a_m+1)$ is a left degree sequence of a spanning tree of $G$.
	\end{proposition}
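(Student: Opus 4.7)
The plan is to prove the two directions separately, with the ``only if'' direction a short forest argument and the ``if'' direction leveraging Proposition~\ref{p2}.

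For the ``only if'' direction, suppose $T$ is a spanning tree of $G$ with left degree sequence $(a_1+1,\dots,a_m+1)$. Counting edges yields $\sum_i(a_i+1)=|E(T)|=m+n-1$, so $\sum_i a_i=n-1$. For any nonempty $S\subseteq[m]$, the set of edges of $T$ incident to $S$ forms a subforest on vertex set $S\cup N_T(S)$ with exactly $\sum_{i\in S}(a_i+1)$ edges, where $N_T(S)\subseteq\bigcup_{i\in S}I_i$ denotes the set of right neighbors of $S$ in $T$; the forest inequality gives $|\bigcup_{i\in S}I_i|\geq|N_T(S)|\geq\sum_{i\in S}a_i+1$, so $(a_i)$ is $G$-draconian.

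For the ``if'' direction, start with the spanning tree $T$ of $F$ (with all left degrees $2$) supplied by Proposition~\ref{p2}, and let $\phi$ be the contraction sending each copy of $I_i$ in $F$ to the vertex $i\in[m]$ of $G$. For each $i$ with $a_i\geq1$, let $T_i$ denote the subgraph of $T$ induced on the copies of $I_i$ and their right neighbors in $T$; it has $2a_i$ edges, and being a subforest of $T$, the number $d_i$ of distinct right neighbors of copies of $I_i$ satisfies $d_i\geq a_i+1$, with equality exactly when $T_i$ is connected. Assuming $T$ can be chosen so that $d_i=a_i+1$ for every such $i$, the simple graph obtained from $\phi(T)$ by deleting duplicate edges has left degree $a_i+1$ at every $i$ with $a_i\geq1$; an edge/vertex count together with the connectedness inherited from $T$ shows it is a spanning tree on $\{i:a_i\geq1\}\cup[n]$. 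Since $G$ has no isolated vertices, for each $i$ with $a_i=0$ we may pick some $j\in I_i$, and adjoining all such edges $(i,j)$ extends the forest to a spanning tree of $G$ with left degree sequence $(a_1+1,\dots,a_m+1)$.

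The main obstacle is securing that $T$ can be chosen so that every $T_i$ is connected. My plan is to pick $T$ minimizing the total number of connected components of the $T_i$ (summed over $i$ with $a_i\geq1$) among all spanning trees of $F$ with all left degrees $2$. If some $T_{i_0}$ had two components $C_1$, $C_2$, the connectedness of $T$ would give a path from $C_1$ to $C_2$ in $T$ leaving $T_{i_0}$ through a copy of some $I_{i_1}$ with $i_1\neq i_0$, and I expect that an edge exchange of the form $(\ell,j)\leftrightarrow(\ell,j')$ at a suitable copy $\ell$ of $I_{i_0}$ strictly decreases the component count while preserving both the spanning-tree property and the all-left-degrees-$2$ condition, contradicting minimality. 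Rigorously choosing $\ell$, $j$, $j'$ so that the swap neither creates a cycle nor disconnects $T$ is the step I anticipate will require the most care.
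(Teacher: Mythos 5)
Your ``only if'' direction is correct and is essentially the paper's argument: the edge count of $T$ gives $\sum_i a_i=n-1$, and the forest inequality applied to the subforest induced by $S$ and its $T$-neighbors gives the draconian inequality. No issues there.

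The ``if'' direction, however, has a genuine gap, and you have correctly located it yourself: everything hinges on being able to choose the tree $T$ from Proposition~\ref{p2} so that every $T_i$ is connected, and this is not established. The exchange you sketch does not obviously work. Once you add an edge $(\ell,j')$ with $j'$ in $C_2$, acyclicity plus the all-left-degrees-$2$ condition force you to delete the \emph{other} edge of the new cycle incident to $\ell$, say $(\ell,j)$; since $C_1$ is a tree, deleting $(\ell,j)$ splits $C_1$ into two pieces, so $T_{i_0}$ typically still has the same number of components after the swap (one split cancels one merge), and the total component count need not decrease. Making this work would require a more delicate choice of $\ell,j,j'$ (e.g.\ arranging that $j$ ceases to be a neighbor of any copy of $I_{i_0}$) or a different potential function, and that is a nontrivial combinatorial argument comparable in difficulty to Proposition~\ref{p2} itself. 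The paper avoids the issue entirely: it takes an \emph{arbitrary} tree $T$ from Proposition~\ref{p2} and merges the $a_i$ copies of $I_i$ one pair at a time. At each merge the graph is connected and acyclic, so either the identification creates a cycle through the merged node (delete one of its edges incident to that node) or it creates a parallel pair (delete the duplicate); in both cases the degree at the merged node drops by exactly one, so after $a_i-1$ merges the degree is $2a_i-(a_i-1)=a_i+1$ and the result is again a tree. I would recommend either adopting that merge-and-delete argument or supplying a complete proof of your connectivity claim; as written, the ``if'' direction is not proved.
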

	\begin{proof}
	Let $a_i := \deg_T(i)-1$ for all left vertices $i \in [m]$ of a spanning tree $T$ of $G$. Assume that $(a_1,\dots,a_m)$ is not $G$-draconian, hence it is not $T$-draconian. Since $T$ has $n+m-1$ edges, we have $n-1 =\sum_{i=1}^m \deg_T(i) - m = \sum_{i=1}^m a_i$. Thus there are $\cb{i_1 < \dots\ < i_k} \subseteq [m]$ and sets $I_{i_1}, \dots, I_{i_k}$ associated with a subgraph $S$ of $T$ such that
	$|I_{i_1} \cup \dots \cup I_{i_k}| < a_{i_1} + \dots + a_{i_k} + 1$. We obtain $|V(S)| = |I_{i_1} \cup \dots \cup I_{i_k}| + k \leq \deg_T(i_1) + \dots + \deg_T(i_k) = |E(S)|$. Hence $S$ has a cycle, which is a contradiction.

	To show the opposite implication, consider the graph $F$ and its spanning tree $T$ constructed in Proposition \ref{p2}. In order to obtain the desired spanning tree $T_G$ of $G$, for every $i \in [m]$, we merge successively the $a_i$ left nodes in $F$ that belong to the same $I_i$. Merging two nodes to a new node $j$, we distinguish two cases: (i) The number of edges remains unchanged. In this case we create a cycle and we can delete one edge of it such that the degree of $j$ reduces by one. (ii) The number of edges reduces by one (as merging two nodes may imply that two edges are merged to one). Note that the number of edges cannot be reduced by more than one as the graph does not contain a cycle at the beginning of each step. This procedure produces a tree $T_G$ of $G$ such that $a_i = \deg_{T_G}(i) - 1$, whenever $a_i > 0$. Adding one appropriate edge to each left node $i$ with $a_i = 0$, we obtain the desired spanning tree.	
\end{proof}

\end{document}